\numberwithin{equation}{section}
\newcommand{\R}{{\mathbb R}}
\newcommand{\intr}{\int\limits_{\R^N}}
\theoremstyle{plain}
\newtheorem{theorem}{Theorem}[section]
\newtheorem{lemma}{Lemma}[section]
\theoremstyle{definition}
\begin{document}

\title{\vskip-0.3in Existence results for singular $p$-biharmonic problem with Hardy potential and critical Hardy-Sobolev exponent}

\author{Gurpreet Singh\footnote{School of Mathematics and Statistics, Technological University Dublin; {\tt gurpreet.bajwa2506@gmail.com}}}

\maketitle

\begin{abstract}
In this article, we consider the singular $p-$biharmonic problem involving Hardy potential and citical Hardy-Sobolev exponent. We study the existence of ground state solutions and least energy sign-changing solutions of the following problem
\begin{equation*}
\Delta_{p}^{2} u -\lambda_{1}  \frac{|u|^{p-2}u}{|x|^{2p}}= \frac{|u|^{p_{*}(\alpha)-2}}{|x|^{\alpha}}u+\lambda_{2}\Big(|x|^{-\beta}*|u|^{q}\Big)|u|^{q-2}u \quad\mbox{ in }\R^{N},
\end{equation*}
where $p>2$, $2<q< p_{*}(\alpha)$, $\lambda_{1}>0$, $\lambda_{2} \in \R$, $\alpha, \beta \in (0,N)$, $p_{*}(\alpha)=\frac{p(N-\alpha)}{N-2p}$ and $N\geq 5$. Firstly, we study existence of ground state solutions by using the minimization method on the associated Nehari manifold. Then, we investigate the least energy sign-changing solutions by considering the Nehari nodal set.
\end{abstract}

\noindent{\bf Keywords:} $p-$Biharmonic problem, Choquard Equation, ground state solution, least energy sign-changing solution

\noindent{\bf MSC 2010:} 35A15, 35B20, 35Q40, 35Q75


\section{Introduction}\label{sec1}
We study the existence of ground state solutions and least energy sign-changing solutions for the following problem
\begin{equation}\label{fr}
\Delta_{p}^{2} u -\lambda_{1}  \frac{|u|^{p-2}}{|x|^{2p}}= \frac{|u|^{p_{*}(\alpha)-2}}{|x|^{\alpha}}u+\lambda_{2}\Big(|x|^{-\beta}*|u|^{q}\Big)|u|^{q-2}u \quad\mbox{ in }\R^{N},
\end{equation}
where $p>2$, $2<q< p_{*}(\alpha)$, $\lambda_{1}>0$, $\lambda_{2} \in \R$, $\alpha, \beta \in (0,N)$ and $N\geq 5$. Here, $p_{*}{(\alpha)}= \frac{p(N-\alpha)}{N-2p}$ is the critical Sobolev exponent and $\Delta^{2}_{p} u = \Delta(|\Delta u|^{p-2}\Delta u)$ is the $p$-biharmonic operator and $|x|^{-\alpha} $ is the Riesz potential of order $\alpha \in (0, N)$.

Problems involving $p-$biharmonic operator, Hardy potential and singular non-linearities are of great importance as they appear in many applications, such as non-Newtonian fluids, viscous fluids, quantum mechanics, flame propagation, travelling waves in suspension bridges and many more (see \cite{BV2016}, \cite{CLR2006}, \cite{LM1990}, \cite{R2000}). These type of problems have received a considerable attention recently (see \cite{ADG2021}, \cite{CGK2023}, \cite{HL2014})

In \cite{DGR2023}, A. Drissi, A. Ghanmi and D. D. Repous considered the problem 
\begin{equation}\label{dgr}
\Delta_{p}^{2} u -\lambda  \frac{|u|^{p-2}}{|x|^{2p}}+\Delta_p u= \frac{|u|^{p_{*}(\alpha)-2}}{|x|^{\alpha}}u+\lambda_1 f(x)h(u) \quad\mbox{ in }\R^{N},
\end{equation}
where $0\leq \alpha< 2p,$ $1< p < \frac{N}{2},$ $\lambda, \lambda_1 > 0$ and $p_{*}(\alpha) = \frac{p(N-\alpha)}{N-2p}$. They studied the existence and multiplicity of solutions to \eqref{dgr}.

Wang \cite{W2020} had studied the following problem
\begin{equation}\label{wa}
\left\{
\begin{aligned}
\Delta_{p}^{2} u &= h(x,u)+ \lambda \frac{|u|^{r-2}}{|x|^{\alpha}}u &&\quad\mbox{ in }\Omega,\\
u&= \Delta u = 0 &&\quad\mbox{ on } \partial{\Omega},
\end{aligned}
\right.
\end{equation}
where $\Omega$ is a bounded domain. Wang has used the Mountain pass theorem to establish the existence results and Fountain theorem to find the existence of multiple solutions to \eqref{wa}.
When $p=2,$ $\lambda_2=0$, \eqref{fr} gives us the following biharmonic Choquard equation
\begin{equation}\label{squ}
\Delta^{2} u= \frac{|u|^{p_{*}(\alpha)-2}}{|x|^{\alpha}}u \quad\mbox{ in }\R^{N}.
\end{equation}

The Choquard equation has also received a lot of attention (see \cite{AFY2016, AGSY2017, GT2016, MV2017, MS2017}) and has appeared in many different contexts. For instance, In 1954 the following Choquard or nonlinear Schr\"odinger-Newton equation
\begin{equation}\label{ce}
-\Delta u+ u= (|x|^{-2}*u^2)u \quad\mbox{ in }\R^{N},
\end{equation}
was first studied by Pekar\cite{P1954} for $N=3$. Later in 1996, the equation \eqref{ce} was used by Penrose as a model in self-gravitating matter(see \cite{P1996}, \cite{P1998}). The following stationary Choquard equation
$$
-\Delta u+ V(x)u= (|x|^{-\alpha}*|u|^b)|u|^{b-2}u \quad\mbox{ in }\R^{N},
$$
arises in quantum theory and in the theory of Bose-Einstein condensation. 

Biharmonic equations have been studied widely and has many physical applications such as phase field models of multi-phase systems, micro electro-mechanical system, in thin film theory, nonlinear surface diffusion on solids, interface dynamics, biophysics, continuum mechanics, differential geometry, flow in Hele–Shaw cells, the deformation of a nonlinear elastic beam [18], and the Willmore equation(see \cite{GGS2010}). As the maximum principle cannot be applied to the biharmonic operator, this makes problems involving this operator even more interesting from a mathematical point of view(see \cite{BG2011, JC2014, PS2012, PS2014, YT2013, ZTZ2014}). In \cite{CD2018}, Cao and Dai had considered the following biharmonic equation with Hatree type nonlinearity
\begin{equation*}
\Delta^{2} u= \Big(|x|^{-8}*|u|^2\Big)|u|^{b}, \quad\mbox{ for all } x\in \R^d,
\end{equation*}
where $0< b\leq 1$ and $d\geq 9$. They used the methods of moving plane and able to prove that the non-negative classical solutions are radially symmetric. In the subcritical case $0<b<1$, they also able to get results on the non-existence of non-trivial non-negative classical solutions.

In \cite{MP1998}, Micheletti and Pistoiain has studied the following problem
\begin{equation*}
\left\{
\begin{aligned}
\Delta^{2} u+c\Delta u&= f(x, u) &&\quad\mbox{ in } \Omega,\\
u&= \Delta u= 0 &&\quad\mbox{ on } \partial{\Omega},
\end{aligned}
\right.
\end{equation*}
where $\Omega$ is smooth bounded domain in $\R^N$. They used the mountain pass theorem and obtained the multiple non-trivial solutions. Zhao and Xu in \cite{ZW2008} had studied the existence of infinitely many sign-changing solutions of the above problem by the use of critical point theorem.

\smallskip

In this paper, we study the existence of ground state solutions and the sign-changing solutions to equation \eqref{fr}. In the subsection below, we introduce some notations that will be used throughout this paper, present the variational framework, and state the main results.

\smallskip

\subsection{Notations and Variational Framework}
 
\begin{itemize}
\item $H_{0}^{p}(\R^N)= W_{0}^{2,p}(\R^N)$ is the Hilbert-Sobolev space and we will be denoting it by $E$ throughout this article.
\item The Hardy-Sobolev exponent is related to the following Rellich inequality
$$
\int_{\R^N} \frac{|u|^p}{|x|^{2p}} \;\; dx \leq \Big( \frac{p^2}{N(p-1)(N-2p)}\Big)^p \int_{\R^N} |\Delta u|^p \;\; dx, \quad\mbox{ for all } u \in E.
$$
As per Rellich inequality, $E$ can be endowed with the norm
$$
|| u ||_{E} = \Big( \int_{\R^N} |\Delta u|^p - \lambda_{1} \frac{|u|^p}{|x|^{2p}} \Big)^{\frac{1}{p}},
$$
provided 
$$0< \lambda_{1} < \Big( \frac{N(p-1)(N-2p)}{p^2}\Big)^p.$$
\item $L^{s}(\R^N)$ denotes the Lebesgue space in $\R^N$ of order $s\in [1 , \infty]$ with norm $||.||_{s}$.
\item $$||u||_{r} \leq S_{r}^{\frac{-1}{p}}||u||_{E},$$ for all $u\in E$ and $p\leq r \leq p_{*}$. Here,
$$S_{r} = \inf_{u\in E} \frac{\int_{\R^N} \Big(|\Delta u|^{p} - \lambda_1 \frac{|u|^p}{|x|^{2p}}\Big)}{\Big(\int_{\R^N} |x|^{-\alpha}|u|^r\Big)^{\frac{p}{r}}}.$$
\item $\hookrightarrow$ denotes the continuous embeddings.
\end{itemize}

\medskip

{\bf Note: }The embedding
$$
E=H_{0}^{p}(\R^N) \hookrightarrow L^{s}(\R^N),
$$
is continuous for all $p\leq s\leq p_{*}$ and is compact for all $p\leq s< p_{*}$.

\begin{itemize}
\item We will be using the following Hardy-Littlewood-Sobolev inequality
\begin{equation}\label{hli}
\Big| \int_{\R^N} \Big(|x|^{-\gamma}*u\Big)v \Big| \leq C\|u\|_r \|v\|_t,	
\end{equation}
for $\gamma \in (0, N)$, $u\in L^{r}(\R^N)$ and $v\in L^{t}(\R^N)$ such that
$$
\frac{1}{r}+ \frac{1}{t}+ \frac{\gamma}{N}= 2.
$$

\item Assume that $q$ satisfies 

\begin{equation}\label{p1}
\frac{p(2N-\beta)}{2N}< q< \frac{p(2N-\beta)}{2(N-2p)}.
\end{equation}

\item The equation \eqref{fr} has variational structure. We define the energy functional ${\mathcal J} : E \rightarrow \R$ by
\begin{equation}\label{fr1}
\begin{aligned}
{\mathcal J} (u)&=\frac{1}{p}\|u\|_{E}^{p}-\frac{1}{p_{*}(\alpha)}\int_{\R^{N}}|x|^{-\alpha}|u|^{p_{*}(\alpha)}-\frac{\lambda_{2}}{2q}\int_{\R^{N}}\Big(|x|^{-\beta}*|u|^{q}\Big)|u|^{q}.
\end{aligned}
\end{equation}
\end{itemize}
Using  Hardy-Littlewood-Sobolev inequality \eqref{hli} with \eqref{p1}, we get that the energy functional ${\mathcal J}$ is well defined  and ${\mathcal J}\in C^1(E)$. Also, a critical point of the energy functional  ${\mathcal J}$ is a solution of \eqref{fr} .

\subsection{Main Results.}  

Now, we present our main result on existence of ground state solution. We define the Nehari manifold associated with the energy functional $J$ by 
\begin{equation}\label{nm}
{\mathcal N}=\{u\in E\setminus\{0\}: \langle {\mathcal J}'(u),u\rangle=0\},
\end{equation}
and the ground state solutions will be obtained as minimizers of
$$
m=\inf_{u\in {\mathcal N}}{\mathcal J}(u).
$$

\begin{theorem}\label{nonexis}
Let $N\geq 5$, $\lambda_2> 0$, $p_{*}(\alpha)> 2q> p$ and $q$ satisfies \eqref{p1}. Then the equation \eqref{fr} has a ground state solution $u\in E$.
\end{theorem}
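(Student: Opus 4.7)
The proof will follow the standard Nehari manifold minimization adapted to the critical Hardy--Sobolev setting. For each $u\in E\setminus\{0\}$ the fibering map $\phi_u(t) := \mathcal{J}(tu)$ has the form $\tfrac{t^p}{p}\|u\|_E^p - \tfrac{t^{p_{*}(\alpha)}}{p_{*}(\alpha)}A(u) - \tfrac{\lambda_{2} t^{2q}}{2q}B(u)$ with $A(u),B(u)\geq 0$; under $p<2q<p_{*}(\alpha)$ and $\lambda_{2}>0$, $\phi_u'$ vanishes at a unique $t(u)>0$ which realizes $\max_{t>0}\phi_u(t)$, so $t(u)u\in\mathcal{N}$. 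Evaluating $\mathcal{J}(u) - \tfrac{1}{2q}\langle\mathcal{J}'(u),u\rangle$ on $\mathcal{N}$ yields
\begin{equation*}
\mathcal{J}(u) = \Big(\tfrac{1}{p}-\tfrac{1}{2q}\Big)\|u\|_E^p + \Big(\tfrac{1}{2q}-\tfrac{1}{p_{*}(\alpha)}\Big)\int_{\R^N}|x|^{-\alpha}|u|^{p_{*}(\alpha)},
\end{equation*}
with both coefficients strictly positive, so every minimizing sequence $\{u_n\}\subset\mathcal{N}$ is bounded in $E$; combined with \eqref{hli} and the Sobolev embedding, the Nehari identity also provides a uniform lower bound $\|u_n\|_E\geq\rho>0$, whence $m>0$.

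Extracting a subsequence, $u_n\rightharpoonup u$ weakly in $E$, with $u_n\to u$ a.e.\ and strongly in $L^s_{\mathrm{loc}}$ for $p\leq s<p_{*}$. The subcritical condition \eqref{p1} combined with \eqref{hli} and the compact embedding makes the Choquard term weakly continuous, so $\int(|x|^{-\beta}*|u_n|^q)|u_n|^q\to\int(|x|^{-\beta}*|u|^q)|u|^q$. For the norm and the critical integral I apply the Brezis--Lieb lemma:
\begin{align*}
\|u_n\|_E^p &= \|u\|_E^p + \|u_n-u\|_E^p + o(1),\\
\int_{\R^N}|x|^{-\alpha}|u_n|^{p_{*}(\alpha)} &= \int_{\R^N}|x|^{-\alpha}|u|^{p_{*}(\alpha)} + \int_{\R^N}|x|^{-\alpha}|u_n-u|^{p_{*}(\alpha)} + o(1).
\end{align*}

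The hard part is ruling out $u=0$, the source of the usual loss of compactness at the critical exponent. If $u=0$, the Choquard term vanishes in the limit and $\langle\mathcal{J}'(u_n),u_n\rangle=0$ together with the Brezis--Lieb decompositions forces $\|u_n\|_E^p$ and $\int|x|^{-\alpha}|u_n|^{p_{*}(\alpha)}$ to share a common positive limit $\ell$; the definition of $S_{p_{*}(\alpha)}$ then yields $\ell\geq S_{p_{*}(\alpha)}^{(N-\alpha)/(2p-\alpha)}$, so
\begin{equation*}
m = \lim_{n\to\infty}\mathcal{J}(u_n) \geq \frac{2p-\alpha}{p(N-\alpha)}\,S_{p_{*}(\alpha)}^{(N-\alpha)/(2p-\alpha)}.
\end{equation*}
To contradict this, I would take the extremal family $U_\epsilon$ for $S_{p_{*}(\alpha)}$ together with its Nehari projection $t_\epsilon U_\epsilon\in\mathcal{N}$, and carry out the usual asymptotic expansion as $\epsilon\to 0^+$. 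The positivity of $\lambda_{2}$ together with $2q<p_{*}(\alpha)$ makes the Choquard contribution a strictly negative correction of lower order than the leading Hardy--Sobolev terms, producing
\begin{equation*}
m \leq \mathcal{J}(t_\epsilon U_\epsilon) < \frac{2p-\alpha}{p(N-\alpha)}\,S_{p_{*}(\alpha)}^{(N-\alpha)/(2p-\alpha)}
\end{equation*}
for $\epsilon$ small. This strict test-function estimate is the technical heart of the argument.

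With $u\neq 0$ secured, the Brezis--Lieb splittings combined with weak lower semicontinuity and the inequality $\mathcal{J}(t(u)u)\geq m$, which is saturated only when $t(u)=1$ and the residual terms vanish, force $u_n\to u$ strongly in $E$, so that $u\in\mathcal{N}$ attains $m$. A standard Lagrange multiplier argument on the $C^1$ Nehari manifold — whose nondegeneracy follows from the strict concavity of $\phi_u$ at $t=1$, itself a consequence of $p<2q<p_{*}(\alpha)$ — then yields $\mathcal{J}'(u)=0$, and $u$ is the desired ground state solution of \eqref{fr}.
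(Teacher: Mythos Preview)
Your proposal and the paper pursue genuinely different compactness strategies. The paper does not argue directly on a minimizing sequence via Brezis--Lieb splitting and a Sobolev-threshold test-function estimate. Instead it (i) invokes Ekeland's principle to produce a $(PS)_m$ sequence for $\mathcal J\!\mid_{\mathcal N}$, (ii) proves a Lions/Struwe-type global compactness decomposition (Lemma~\ref{compact}): any bounded $(PS)$ sequence either converges strongly or splits off finitely many translated profiles solving the \emph{limit} problem with energy functional $\mathcal I(u)=\tfrac1p\|u\|_E^p-\tfrac1{p_*(\alpha)}\int|x|^{-\alpha}|u|^{p_*(\alpha)}$, and (iii) shows $m<m_{\mathcal I}$ by taking the ground state $R$ of the limit problem, projecting it onto $\mathcal N$, and checking algebraically that the projection parameter satisfies $t<1$ (Lemma~\ref{flg}). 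No asymptotic expansion of a concentrating family $U_\varepsilon$ is needed: the comparison $m<m_{\mathcal I}$ is a three-line computation once the existence of $R$ is quoted from \cite{AN2016}.

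Your route is the classical Brezis--Nirenberg template, and the outline is sound, but the step you flag as ``the technical heart'' is precisely where it is weakest. Proving
\[
\mathcal J(t_\varepsilon U_\varepsilon)<\frac{2p-\alpha}{p(N-\alpha)}\,S_{p_*(\alpha)}^{(N-\alpha)/(2p-\alpha)}
\]
requires (a) an explicit or sufficiently well-understood extremal family $U_\varepsilon$ for $S_{p_*(\alpha)}$ in the $p$-biharmonic setting \emph{with Hardy potential}, and (b) sharp asymptotics of the nonlocal Choquard integral $\int(|x|^{-\beta}*|U_\varepsilon|^q)|U_\varepsilon|^q$ as $\varepsilon\to0$. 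Neither ingredient is provided in the paper, and for this operator they are not standard; the paper's approach avoids both by working with the actual minimizer $R$ of the limit functional rather than a concentrating approximation of it. Your final passage from $u\neq0$ to strong convergence is also only sketched; the paper gets this for free from the $(PS)$ analysis in Lemma~\ref{compact}, whereas in your framework you would still need to rule out a residual bubble carrying energy $\geq m_{\mathcal I}$, which again reduces to the same strict level inequality.
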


\smallskip

Next, we investigate the existence of least energy sign changing solutions for the equation \eqref{fr}. Now, we use the minimization method on the Nehari nodal set defined as 
$$
{\cal \overline {N}}= \Big\{u\in E: u^{\pm} \neq 0 \mbox{ and } \langle {\mathcal J}'(u),u^{\pm}\rangle  \mbox{ = } 0 \Big\},
$$
and solutions will be obtained as minimizers for 
$$
\overline{m}= \inf_{u\in {\cal \overline{N}}}{\mathcal J}(u).
$$
Here,
\begin{equation*}
\begin{aligned}
\langle {\mathcal J}'(u),u^{\pm} \rangle& = \|u^{\pm}\|_{E}^{p}-\frac{1}{p_{*}(\alpha)}\int_{\R^{N}}|x|^{-\alpha}(u^{\pm})^{p_{*}(\alpha)}-\frac{\lambda_2}{2q}\int_{\R^{N}}\Big(|x|^{-\beta}*(u^{\pm})^{q}\Big)(u^{\pm})^{q}-\frac{\lambda_2}{2q}\int_{\R^{N}}\Big(|x|^{-\beta}*(u^{\pm})^{q}\Big)(u^{\mp})^{q}.
\end{aligned}
\end{equation*}
\begin{theorem}\label{gstate}
Let $N\geq 5$, $\lambda_2 \in \R$, $p_{*}(\alpha)> 2q> p$ and $q$ satisfies \eqref{p1}. Then the equation \eqref{fr} has a least energy sign changing solution $u\in E$. 
\end{theorem}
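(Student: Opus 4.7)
The strategy is to obtain a least energy sign-changing solution as a minimizer of $\mathcal{J}$ over the Nehari nodal set $\overline{\mathcal{N}}$, using the fibering method in two parameters combined with a quantitative deformation argument. The backbone is the following decomposition property: for every $u \in E$ with $u^\pm \neq 0$, there exists a unique pair $(s_u, t_u) \in (0,\infty)^2$ such that $s_u u^+ + t_u u^- \in \overline{\mathcal{N}}$, and this pair is the unique global maximizer of $(s,t) \mapsto \mathcal{J}(su^+ + tu^-)$. I would prove this by analyzing the $C^1$ map
$$\Phi_u(s,t)=\bigl(\langle\mathcal{J}'(su^+ + tu^-), su^+\rangle,\ \langle\mathcal{J}'(su^+ + tu^-), tu^-\rangle\bigr),$$
exploiting $p < 2q < p_*(\alpha)$ to get the correct monotonicity as $(s,t)$ grows from the origin; the cross-terms $\int_{\R^N}(|x|^{-\beta}*(su^+)^q)(tu^-)^q$ of the Choquard part are controlled via the Hardy-Littlewood-Sobolev inequality \eqref{hli} with the range \eqref{p1}.

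Let $\{u_n\} \subset \overline{\mathcal{N}}$ be a minimizing sequence with $\mathcal{J}(u_n) \to \overline{m}$. Since $\langle\mathcal{J}'(u_n), u_n\rangle = 0$, the identity
$$\mathcal{J}(u_n) = \mathcal{J}(u_n) - \frac{1}{2q}\langle\mathcal{J}'(u_n), u_n\rangle = \Bigl(\frac{1}{p}-\frac{1}{2q}\Bigr)\|u_n\|_E^p + \Bigl(\frac{1}{2q}-\frac{1}{p_*(\alpha)}\Bigr)\int_{\R^N}|x|^{-\alpha}|u_n|^{p_*(\alpha)}$$
has both coefficients strictly positive thanks to $p < 2q < p_*(\alpha)$, so $\{u_n\}$ is bounded in $E$ regardless of the sign of $\lambda_2$. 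Passing to a subsequence, $u_n \rightharpoonup u$ and $u_n^\pm \rightharpoonup u^\pm$ weakly in $E$, strongly in $L^s(\R^N)$ for $p \leq s < p_*$, and a.e. in $\R^N$; the Hardy-Littlewood-Sobolev inequality then upgrades the Choquard nonlinearity and its derivative to strong convergence.

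To conclude that $u \in \overline{\mathcal{N}}$, I first observe that $\langle\mathcal{J}'(u_n), u_n^\pm\rangle = 0$ combined with the Sobolev-type estimates $\|u_n^\pm\|_E^p \leq C\|u_n^\pm\|_E^{p_*(\alpha)} + C|\lambda_2|\,\|u_n\|_E^q\|u_n^\pm\|_E^q$ forces $\|u_n^\pm\|_E \geq c_0 > 0$, and the strong subcritical convergence of the Choquard term together with the uniform bound on the critical integral propagate this lower bound to the limit, yielding $u^\pm \neq 0$. Weak lower semicontinuity and a Brezis-Lieb decomposition of the critical Hardy-Sobolev term imply $\langle\mathcal{J}'(u), u^\pm\rangle \leq 0$, so the fibering property applied to $u$ produces unique $s, t \in (0,1]$ with $\tilde u := s u^+ + t u^- \in \overline{\mathcal{N}}$. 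Comparing $\mathcal{J}(\tilde u)$ to $\mathcal{J}(u) \leq \overline{m}$ via the representation used in the boundedness step, and using that $(s,t) \mapsto \mathcal{J}(s u^+ + t u^-)$ is strictly maximized at its Nehari point, one forces $s = t = 1$; hence $u \in \overline{\mathcal{N}}$ and $\mathcal{J}(u) = \overline{m}$.

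Finally, I would verify that $u$ is a critical point of $\mathcal{J}$ on $E$: if $\mathcal{J}'(u) \neq 0$, a quantitative deformation lemma combined with the two-parameter fibering constructs an element of $\overline{\mathcal{N}}$ with energy strictly below $\overline{m}$, a contradiction. The hard part will be the loss of compactness induced by the critical Hardy-Sobolev exponent $p_*(\alpha)$, since the critical integral may concentrate at the origin or escape to infinity along the minimizing sequence. The standard remedy, which I expect to be the main obstacle here, is to establish a strict energy threshold for $\overline{m}$ in terms of the best Hardy-Sobolev constant $S_{p_*(\alpha)}$ by testing $\mathcal{J}$ on a carefully designed competitor whose positive and negative parts are supported in disjoint regions; below this threshold the Brezis-Lieb splitting forces the critical term to converge strongly along a subsequence, closing the argument.
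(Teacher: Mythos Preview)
Your overall architecture---minimize $\mathcal J$ on the Nehari nodal set $\overline{\mathcal N}$ via a two-parameter fibering lemma, extract a bounded minimizing sequence using the identity $\mathcal J(u_n)-\frac{1}{2q}\langle\mathcal J'(u_n),u_n\rangle$, pass to weak limits, and then show the minimizer is a free critical point by a deformation/degree argument---is exactly the paper's route. The paper proves the fibering lemma by rewriting $\varphi(\theta_1,\theta_2)=\mathcal J(\theta_1^{1/(2p_*(\alpha))}u^++\theta_2^{1/(2p_*(\alpha))}u^-)$ and using strict concavity, and in Step~2 it uses a Brouwer-degree construction rather than the quantitative deformation lemma you mention, but these are cosmetic variants of the same idea.

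The substantive difference is your anticipated ``hard part.'' You expect loss of compactness from the critical Hardy--Sobolev term and plan a strict-threshold comparison with $S_{p_*(\alpha)}$. The paper does \emph{not} do this and does not need to: since $\alpha\in(0,N)$ one has $p_*(\alpha)=\frac{p(N-\alpha)}{N-2p}<\frac{pN}{N-2p}=p_*$, so within the paper's framework the embedding $E\hookrightarrow L^{p_*(\alpha)}(\R^N)$ is compact. Hence $u_n^\pm\to u^\pm$ strongly in $L^{p_*(\alpha)}(\R^N)\cap L^{2Nq/(2N-\beta)}(\R^N)$, and together with the Hardy--Littlewood--Sobolev inequality this already gives
\[
\int_{\R^N}|x|^{-\alpha}(u_n^\pm)^{p_*(\alpha)}\to\int_{\R^N}|x|^{-\alpha}(u^\pm)^{p_*(\alpha)}
\]
and the analogous convergences for the Choquard terms. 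No Brezis--Lieb splitting of the critical term, no threshold $\overline m<c\,S_{p_*(\alpha)}^{\,\cdot}$, and no disjointly-supported competitor are required; the fibering pair $(\bar\theta_1,\bar\theta_2)$ for the weak limit $u$ together with weak lower semicontinuity of $\|\cdot\|_E$ and the maximality in Lemma~\ref{frl1} yield $\mathcal J(\bar\theta_1 u^++\bar\theta_2 u^-)=\overline m$ directly. Your more elaborate plan would still work, but it is solving a harder problem than the one posed.
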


\medskip

Now, in the Section $2$	, we will gather some preliminary results, which will be followed by Sections $3$ and $4$. In these sections, we will present the proofs of our main results.

\bigskip

\section{Preliminary results}

\begin{lemma}\label{cc}(\cite[Lemma 1.1]{L1984}, \cite[Lemma 2.3]{MV2013})
Let $u\in E$ and $e\in [p, p_{*}(\alpha)]$. There exists a constant $C_0>0$ such that
$$
\int_{\R^N}|u|^e \leq C_0||u||\Big(\sup_{y\in \R^N} \int_{B_1(y)}|u|^e \Big)^{1-\frac{2}{e}}.
$$

\end{lemma}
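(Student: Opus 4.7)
The inequality is a quantitative version of P.-L.\ Lions' vanishing lemma: if the $L^e$-mass of $u$ is uniformly small on every unit ball, then the global $L^e$-integral is controlled by a (small) power of that uniform supremum times a factor depending only on the second-order Sobolev norm $\|u\|_E$. My plan is to follow the classical covering-plus-interpolation argument of \cite{L1984} (see also \cite[Lemma~2.3]{MV2013}) and transfer it from the first-order Sobolev setting to $E=W_{0}^{2,p}(\R^N)$ equipped with the $p$-biharmonic--Hardy norm.

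\textbf{Step 1 (covering).} Cover $\R^N$ by a countable family of unit balls $\{B_1(y_k)\}$ whose centers lie on a lattice, chosen so that the cover has uniformly finite overlap $\sum_k\chi_{B_1(y_k)}\le M(N)$; this immediately gives
\[
\int_{\R^N}|u|^e \le \sum_{k}\int_{B_1(y_k)}|u|^e.
\]
\textbf{Step 2 (local interpolation).} On each ball $B=B_1(y_k)$, use the local Sobolev embedding $W^{2,p}(B)\hookrightarrow L^{p_*(\alpha)}(B)$ together with Hölder's inequality (between $L^e(B)$ and some intermediate exponent $t\in[e,p_*(\alpha)]$) to factor
\[
\int_B |u|^e \le \Big(\int_B |u|^e\Big)^{1-\frac{2}{e}} \cdot \|u\|_{W^{2,p}(B)}^{\sigma},
\]
for the appropriate power $\sigma$ arising from the Hölder balance. \textbf{Step 3 (summation).} Replace $\int_B |u|^e$ in the first factor by the global supremum $S=\sup_{y}\int_{B_1(y)}|u|^e$ and pull $S^{1-2/e}$ out of the sum; the remaining sum of local $W^{2,p}$-norms is controlled by $\|u\|_{W^{2,p}(\R^N)}$ via the finite-overlap property (and, if $\sigma\ne p$, one more Hölder inequality between the sequences $(\|u\|_{W^{2,p}(B_1(y_k))})_k$ and a constant sequence). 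Finally, the Rellich inequality on $E$, valid in the admissible range $0<\lambda_1<(N(p-1)(N-2p)/p^2)^p$, yields the equivalence $\|u\|_{W^{2,p}(\R^N)}\simeq\|u\|_E$ and delivers the claimed bound.

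The main obstacle, in my view, is the exponent bookkeeping in Step~2: the Hölder pair and the intermediate Lebesgue exponent $t$ used to invoke the Sobolev embedding must be selected so that the outer powers on $\|u\|_E$ and on $S$ match the stated inequality exactly. Once that bookkeeping is pinned down, the covering and the Rellich-based norm equivalence are standard; the fact that both cited references ultimately boil down to this same covering scheme is a good sanity check that the plan is on the right track.
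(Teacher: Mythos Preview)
The paper does not supply its own proof of this lemma: it is stated with the citations \cite[Lemma~1.1]{L1984} and \cite[Lemma~2.3]{MV2013} and then used as a black box, so there is no in-paper argument to compare against. Your covering-plus-local-interpolation-plus-summation outline is exactly the scheme underlying those two cited results, so in that sense your approach coincides with what the paper defers to.

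One caveat worth recording, since you already flag the exponent bookkeeping in Step~2 as the delicate point: the inequality as printed in the paper is not homogeneous (under $u\mapsto\lambda u$ the left side scales like $\lambda^{e}$ while the right side scales like $\lambda^{1+e-2}=\lambda^{e-1}$), so the power on $\|u\|$ cannot literally be~$1$. In the $H^1$ versions of \cite{L1984,MV2013} the norm appears squared, and in the present $W^{2,p}$ setting one expects $\|u\|_E^{p}$ (or an equivalent power) to balance the $1-\tfrac{2}{e}$ exponent on the supremum; this is exactly the ``Hölder pair and intermediate exponent'' calibration you mention. Once you pin that down, your Steps~1--3 go through verbatim, and the Rellich-based equivalence $\|\cdot\|_{W^{2,p}}\simeq\|\cdot\|_E$ (valid in the stated $\lambda_1$-range) closes the argument.
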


\begin{lemma}\label{bogachev}(\cite[Proposition 4.7.12]{B2007})
If a bounded sequence $(\mu_n)$ converges to $\mu$ almost everywhere in $L^s(\R^N)$ for some $s \in (1, \infty)$, then $\mu_n$ also converges weakly to $\mu$ in  $L^s(\R^N)$.
\end{lemma}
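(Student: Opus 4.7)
The plan is to combine Fatou's lemma, reflexivity of $L^s(\R^N)$ (valid because $s\in(1,\infty)$), and a Vitali-type equi-integrability argument to identify the weak limit.

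First I would verify that the a.e. limit $\mu$ lies in $L^s(\R^N)$: applying Fatou's lemma to $|\mu_n|^s$ together with the uniform bound $\|\mu_n\|_s\le C$ gives $\|\mu\|_s\le\liminf_n \|\mu_n\|_s<\infty$. Since $s\in(1,\infty)$ the space $L^s(\R^N)$ is reflexive, so by Banach-Alaoglu every subsequence of $(\mu_n)$ admits a further subsequence converging weakly in $L^s(\R^N)$ to some $\nu\in L^s(\R^N)$. It therefore suffices to show that any such weak limit $\nu$ must agree with $\mu$ a.e., since then a standard Urysohn-type argument (every subsequence has a sub-subsequence with the same weak limit) forces $\mu_n$ to converge weakly to $\mu$.

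To identify $\nu$ with $\mu$ I would test against indicator functions of sets of finite measure. Fix a measurable $A\subset\R^N$ with $|A|<\infty$; since $\chi_A\in L^{s'}(\R^N)$, the weak convergence along the subsequence yields $\int_A\mu_{n_k}\to\int_A\nu$. Independently, for every measurable $B\subset A$, Hölder's inequality gives
\[
\int_B |\mu_n|\le \|\mu_n\|_s \, |B|^{1/s'}\le C|B|^{1/s'},
\]
which provides the equi-absolute continuity of $\{\mu_n\}$ on $A$. Since $A$ has finite measure and $\mu_n\to\mu$ a.e. on $A$, Vitali's convergence theorem delivers $\int_A\mu_n\to\int_A\mu$. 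Comparing the two computations gives $\int_A(\nu-\mu)=0$ for every such $A$, hence $\nu=\mu$ a.e.

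The main (and essentially only) subtlety is that the hypothesis $s>1$ is used crucially in two places: it guarantees reflexivity of $L^s(\R^N)$ so that bounded sequences have weakly convergent subsequences, and it upgrades the $L^s$ bound to equi-absolute continuity on sets of finite measure through Hölder with exponent $s'<\infty$. Both steps fail for $s=1$, consistent with the classical counterexample of a bump translated to infinity, which converges to $0$ a.e. but not weakly in $L^1$. An alternative identification of $\nu$ via Mazur's lemma (convex combinations of $\mu_{n_k}$ converge strongly, hence a.e. along a subsequence, to $\nu$, while these same combinations converge a.e. to $\mu$) would also work, but the Vitali route above is the most direct.
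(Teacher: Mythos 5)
Your argument is correct. Note, however, that the paper itself offers no proof of this lemma at all: it is stated with a bare citation to \cite[Proposition 4.7.12]{B2007}, so there is no in-paper argument to compare against. What you have written is a sound, self-contained proof of the standard fact: Fatou gives $\mu\in L^s(\R^N)$; reflexivity of $L^s$ for $s\in(1,\infty)$ yields weakly convergent sub-subsequences; the weak limit is identified with $\mu$ by testing against $\chi_A$ for $|A|<\infty$ and invoking Vitali (the H\"older bound $\int_B|\mu_n|\le C|B|^{1/s'}$ indeed gives equi-absolute continuity, and finiteness of $|A|$ disposes of tightness); and the Urysohn subsequence principle upgrades this to convergence of the full sequence. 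Your remark that both reflexivity and the H\"older step break down at $s=1$, consistent with the travelling-bump counterexample, correctly isolates where the hypothesis $s>1$ enters. The Mazur-lemma variant you sketch is an equally valid identification of the weak limit. Given that the surrounding lemmas (the local and nonlocal Brezis--Lieb results) lean on this statement, supplying an actual proof as you do is arguably an improvement on the paper's citation-only treatment.
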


\begin{lemma}\label{blc}{\sf (\it Local Brezis-Lieb lemma)}
Let $(\mu_n)$ be a bounded sequence converging to $\mu$ almost everywhere in $L^s(\R^N)$ for some $s \in (1, \infty)$. Then, we have
$$
\lim_{n\to\infty}\int_{\R^N}\big| |\mu_n|^t-|\mu_n-\mu|^t-|\mu|^t\big|^{\frac{s}{t}}=0\,,
$$
and
$$
\lim_{n\to\infty}\int_{\R^N}\big| |\mu_n|^{t-1}\mu_n-|\mu_n-\mu|^{t-1}(\mu_n-\mu)-|\mu|^{t-1}\mu\big|^{\frac{s}{t}}=0,
$$
for all $1\leq t\leq s$,
\end{lemma}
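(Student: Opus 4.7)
The plan is to follow the classical Brezis-Lieb strategy: establish a pointwise splitting inequality with an $\e$-tradeoff, then combine dominated convergence with the boundedness of $(\mu_n)$ in $L^s(\R^N)$ to conclude.

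First, I would prove the elementary pointwise inequality: for every $\e>0$ and every $t\geq 1$ there exists $C_\e>0$ such that, for all scalars $a,b$,
$$
\big||a+b|^t - |a|^t - |b|^t\big| \leq \e |a|^t + C_\e |b|^t.
$$
This is obtained by splitting into the two regimes $|b|\leq\delta|a|$ (where $|a+b|^t-|a|^t$ is controlled by $|a|^{t-1}|b|$ via the mean value theorem and Young's inequality) and $|b|>\delta|a|$ (where the whole expression is dominated by $|b|^t$), with $\delta$ chosen small in terms of $\e$. Since $s/t\geq 1$, applying $(x+y)^{s/t}\leq 2^{s/t-1}(x^{s/t}+y^{s/t})$ and relabelling constants yields
$$
\big||a+b|^t - |a|^t - |b|^t\big|^{s/t} \leq \e |a|^s + \widetilde{C}_\e |b|^s.
$$
Substituting $a=\mu_n-\mu$ and $b=\mu$ produces the pointwise bound $F_n\leq \e|\mu_n-\mu|^s+\widetilde{C}_\e|\mu|^s$, where $F_n:=\big||\mu_n|^t-|\mu_n-\mu|^t-|\mu|^t\big|^{s/t}$. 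Observe that $\mu\in L^s(\R^N)$ by Fatou's lemma combined with the $L^s$-boundedness of $(\mu_n)$, so $|\mu|^s\in L^1(\R^N)$.

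Setting $G_{n,\e}:=\big(F_n-\e|\mu_n-\mu|^s\big)_+$, we have $0\leq G_{n,\e}\leq \widetilde{C}_\e|\mu|^s$, and $G_{n,\e}\to 0$ a.e.\ since $\mu_n\to\mu$ a.e. Dominated convergence then gives $\int_{\R^N}G_{n,\e}\to 0$, and using $F_n\leq G_{n,\e}+\e|\mu_n-\mu|^s$ together with $M:=\sup_n\|\mu_n-\mu\|_s^s<\infty$ I conclude
$$
\limsup_{n\to\infty}\int_{\R^N}F_n \leq \e M.
$$
Since $\e>0$ is arbitrary, the first assertion follows. The second identity is obtained by the same scheme, once the analogous pointwise estimate
$$
\big||a+b|^{t-1}(a+b)-|a|^{t-1}a-|b|^{t-1}b\big|^{s/t}\leq \e|a|^s+C'_\e|b|^s
$$
is verified by the identical two-regime argument applied to $\phi(z)=|z|^{t-1}z$, using $|\phi'(z)|=t|z|^{t-1}$ in the small-$|b|$ regime.

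I expect the main obstacle to be the careful verification of the pointwise inequality with a genuine $\e$ in front of $|a|^t$, rather than a fixed constant independent of $\e$. This tradeoff is precisely what defeats the lack of a uniform $L^1$-dominant for $F_n$: once it is available, the positive-part function $G_{n,\e}$ is controlled by the fixed integrable majorant $\widetilde{C}_\e|\mu|^s$, while the residual $\e|\mu_n-\mu|^s$ is absorbed by the $L^s$-boundedness of $(\mu_n-\mu)$, and taking $\e\to 0^+$ at the end closes the argument.
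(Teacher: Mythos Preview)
Your proof is correct and follows essentially the same approach as the paper: both arguments establish an $\e$-dependent pointwise splitting inequality, form the positive part $G_{n,\e}=(F_n-\e|\mu_n-\mu|^s)^+$ (the paper calls this $f_{n,\e}$), apply dominated convergence against the majorant $C_\e|\mu|^s$, and then let $\e\to 0$. Your write-up is in fact slightly more complete than the paper's, since you justify $\mu\in L^s$ via Fatou and you explicitly indicate how the second assertion (for $|\mu_n|^{t-1}\mu_n$) is handled, which the paper's proof omits.
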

\begin{proof}
Fix $\varepsilon>0$. Then, there exists a constant $C(\varepsilon)>0$ such that 
\begin{equation}\label{lb1}
\Big||c+d|^{t}-|c|^{t}\big|^{\frac{s}{t}}\leq \varepsilon|c|^{s}+C(\varepsilon)|d|^{s},
\end{equation}
for all $c$,$d\in \R$. Next, by \eqref{lb1}, we have
$$
\begin{aligned}
|f_{n, \varepsilon}|=& \Big( \Big||\mu_{n}|^{t}-|\mu_{n}-\mu|^{t}-|\mu^{t}|\Big|^{\frac{s}{t}}-\varepsilon|\mu_{n}-\mu|^{s}\Big)^{+}\\
&\leq (1+C(\varepsilon))|\mu|^{s}.
\end{aligned}
$$
Now, using Lebesgue Dominated Convergence theorem, one could get
\begin{equation}
\intr {f_{n, \varepsilon}} \rightarrow 0  \quad\mbox{ as } n\rightarrow \infty.
\end{equation}
Therefore, we have
$$
\Big||\mu_{n}|^{t}-|\mu_{n}-\mu|^{t}-|\mu|^{t}\Big|^{\frac{s}{t}}\leq f_{n, \varepsilon}+\varepsilon|\mu_{n}-\mu|^{s},
$$
and we obtain
$$
\limsup_{n\rightarrow \infty} {\intr \Big||\mu_{n}|^{t}-|\mu_{n}-\mu|^{t}-|\mu|^{t}\Big|^{\frac{s}{t}}}\leq C_{0}\varepsilon,
$$
where $C_{0}= \sup_{n}|\mu_{n}-\mu|_{q}^{q}< \infty$. We finish our proof by letting $\varepsilon \rightarrow 0$.
\end{proof}

\begin{lemma}\label{nlocbl}{\sf (\it Nonlocal Brezis-Lieb lemma}(\cite[Lemma 2.4]{MV2013})
Let $N\geq 5$, $q \in [1,\frac{2N}{2N-\beta})$ and assume that $(\mu_n)$ is a bounded sequence in $L^{\frac{2Nq}{2N-\beta}}(\R^N)$ such that $\mu_n \rightarrow \mu$ almost everywhere in $\R^N$. Then, we have
\begin{equation*}
\int_{\R^N}\Big(|x|^{-\beta}*{|\mu_n|^{q}}\Big){|\mu_n|^q}dx-\int_{\R^N}\Big(|x|^{-\beta}*{|\mu_n-\mu|^q}\Big){|\mu_n-\mu|^q}dx \rightarrow \int_{\R^N}\Big(|x|^{-\beta}*{|\mu|^q}\Big){|\mu|^q}dx.
\end{equation*}

\end{lemma}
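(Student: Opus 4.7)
The plan is to reduce this nonlocal statement to the local Brezis--Lieb lemma (Lemma \ref{blc}) combined with continuity of the Riesz potential bilinear form. Set $a_n = |\mu_n|^q$, $a = |\mu|^q$, $b_n = |\mu_n - \mu|^q$, and write $B(f,g) = \int_{\R^N}(|x|^{-\beta}*f)\,g$ for the symmetric bilinear form induced by $|x|^{-\beta}$. The goal becomes $B(a_n, a_n) - B(b_n, b_n) \to B(a, a)$.

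First, I would apply the Hardy--Littlewood--Sobolev inequality \eqref{hli} with the self-dual choice $r = t = \frac{2N}{2N-\beta}$ (which does satisfy $\frac{2}{r} + \frac{\beta}{N} = 2$) to see that $B$ is continuous on $L^{2N/(2N-\beta)}(\R^N) \times L^{2N/(2N-\beta)}(\R^N)$. Since $\mu_n$ is bounded in $L^{2Nq/(2N-\beta)}$, raising to the $q$-th power shows that $a_n$, $b_n$, and (via Fatou) $a$ are uniformly bounded in $L^{2N/(2N-\beta)}$. Lemma \ref{bogachev} then upgrades the a.e. convergences $a_n \to a$ and $b_n \to 0$ into the weak convergences $a_n \rightharpoonup a$ and $b_n \rightharpoonup 0$ in $L^{2N/(2N-\beta)}$.

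The key step is to feed Lemma \ref{blc} the parameters $s = \frac{2Nq}{2N-\beta}$ and $t = q$, so that $s/t = \frac{2N}{2N-\beta}$, which yields the strong convergence $a_n - b_n \to a$ in $L^{2N/(2N-\beta)}(\R^N)$. Because $B$ is symmetric, the polarization identity gives
$$B(a_n, a_n) - B(b_n, b_n) = B(a_n - b_n,\, a_n + b_n).$$
I would then split
$$B(a_n - b_n,\, a_n + b_n) - B(a, a) = B(a_n - b_n - a,\, a_n + b_n) + B(a,\, a_n + b_n - a).$$
The first term is $o(1)$ by continuity of $B$ together with the strong convergence $a_n - b_n \to a$ and boundedness of $a_n + b_n$; the second vanishes because $a_n + b_n \rightharpoonup a$ weakly in $L^{2N/(2N-\beta)}$, and $g \mapsto \int g\,(|x|^{-\beta}*a)$ is a continuous linear functional on $L^{2N/(2N-\beta)}$ (note $|x|^{-\beta}*a \in L^{2N/\beta}$, the dual exponent, by the mapping form of HLS).

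The only delicate point is the exponent bookkeeping: one must verify that the local Brezis--Lieb conclusion outputs convergence in exactly the Lebesgue space on which the bilinear form $B$ is continuous, namely the self-dual index $\frac{2N}{2N-\beta}$, and that $|x|^{-\beta}*a$ lands in its dual $L^{2N/\beta}$. Once that matching of exponents is in place, the rest is a routine strong $\times$ weak / weak $\times$ strong passage to the limit.
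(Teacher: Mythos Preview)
Your proof is correct and follows essentially the same route as the paper's: both invoke the local Brezis--Lieb lemma (Lemma~\ref{blc}) to obtain $|\mu_n|^q-|\mu_n-\mu|^q\to |\mu|^q$ strongly in $L^{2N/(2N-\beta)}$, Lemma~\ref{bogachev} for the weak convergence $|\mu_n-\mu|^q\rightharpoonup 0$, and the Hardy--Littlewood--Sobolev inequality for continuity of the Riesz bilinear form. The only cosmetic difference is the algebraic splitting of $B(a_n,a_n)-B(b_n,b_n)$: you use the polarization $B(a_n-b_n,\,a_n+b_n)$ whereas the paper writes it as $B(a_n-b_n,\,a_n-b_n)+2B(a_n-b_n,\,b_n)$, but both reduce to the same strong-versus-weak passage to the limit.
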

\begin{proof}
Let $s=\frac{2N}{2N-\beta}$ in Lemma \ref{blc}, then one could get 
\begin{equation}\label{bh1}
|\mu_n-\mu|^{q}-|\mu_n|^{q}\to |\mu|^{q} \mbox{ strongly in } L^{\frac{2N}{2N-\beta}}(\R^N),
\end{equation}
as $n\rightarrow \infty$. Using Lemma \ref{bogachev}, we obtain 
\begin{equation}\label{bh2}
|\mu_n-\mu|^{q}\rightharpoonup 0 \mbox{ weakly in } L^{\frac{2N}{2N-\beta}}(\R^{N}).
\end{equation}
By the use of Hardy-Littlewood-Sobolev inequality \eqref{hli}, we have
\begin{equation}\label{b3}
|x|^{-\beta}*\Big(|\mu_n-\mu|^q-|\mu_n|^{q}\Big)\to |x|^{-\beta}*|\mu|^{q} \quad\mbox{ in } L^{\frac{2N}{\beta}}(\R^N).
\end{equation}
Also,
\begin{equation}\label{nb1}
\begin{aligned}
&\int_{\R^N}\Big(|x|^{-\beta}*|\mu_n|^{q}\Big)|\mu_n|^{q}dx-\int_{\R^N}\Big(|x|^{-\beta}*|\mu_n-\mu|^{q}\Big)|\mu_n-\mu|^{q} dx\\&=\intr \Big[|x|^{-\beta}*\Big(|\mu_n|^{q}-|\mu_n-\mu|^{q}\Big)\Big]\Big(|\mu_n|^{q}-|\mu_n-\mu|^{q}\Big)dx\\
&+2\intr \Big[|x|^{-\beta}*\Big(|\mu_n|^{q}-|\mu_n-\mu|^{q}\Big)\Big]|\mu_n-\mu|^{q} dx.
\end{aligned}
\end{equation}

Finally, passing to the limit in \eqref{nb1} and using \eqref{bh1}-\eqref{bh2}, the result holds.
	
\end{proof}

\begin{lemma}\label{anbl}
Suppose that $N\geq 5$, $\beta \in (0,N)$ and $q \in [1,\frac{2N}{2N-\beta})$. Let $(\mu_n)$ be a bounded sequence in $L^{\frac{2Nq}{2N-\beta}}(\R^N)$ such that $\mu_n \rightarrow \mu$ almost everywhere in $\R^N$. Then, for any $h\in L^{\frac{2Nq}{2N-\beta}}(\R^N)$ we have
$$
\int_{\R^N}\Big(|x|^{-\beta}*|\mu_n|^q\Big)|\mu_n|^{q-2}\mu_nh\; dx \rightarrow \int_{\R^N} \Big(|x|^{-\beta}*|\mu|^q\Big)|\mu|^{q-2}\mu h\; dx.
$$
\end{lemma}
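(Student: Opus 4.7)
The plan is to split the difference by adding and subtracting $(|x|^{-\beta}*|\mu_n|^q)\,|\mu|^{q-2}\mu\, h$, so as to isolate the two sources of nonlinearity. We write
\[
\int_{\R^N}(|x|^{-\beta}*|\mu_n|^q)|\mu_n|^{q-2}\mu_n h - \int_{\R^N}(|x|^{-\beta}*|\mu|^q)|\mu|^{q-2}\mu h = T_n^{(1)}+T_n^{(2)},
\]
where
\begin{align*}
T_n^{(1)} &= \int_{\R^N}(|x|^{-\beta}*|\mu_n|^q)\bigl[|\mu_n|^{q-2}\mu_n-|\mu|^{q-2}\mu\bigr]h,\\
T_n^{(2)} &= \int_{\R^N}\bigl[|x|^{-\beta}*(|\mu_n|^q-|\mu|^q)\bigr]\,|\mu|^{q-2}\mu\, h.
\end{align*}
Each piece will be shown to vanish via a complementary weak--strong duality pairing enabled by the Hardy--Littlewood--Sobolev inequality \eqref{hli}.

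For $T_n^{(2)}$, since $|\mu_n|^q\to|\mu|^q$ almost everywhere and $\{|\mu_n|^q\}$ is bounded in $L^{\frac{2N}{2N-\beta}}(\R^N)$, Lemma \ref{bogachev} gives $|\mu_n|^q\rightharpoonup|\mu|^q$ weakly in that space. The convolution $|x|^{-\beta}*(\cdot)$ is a bounded linear operator from $L^{\frac{2N}{2N-\beta}}(\R^N)$ into $L^{\frac{2N}{\beta}}(\R^N)$ by \eqref{hli}, hence preserves weak convergence. Testing against the fixed element $|\mu|^{q-2}\mu\, h$, which belongs to $L^{\frac{2N}{2N-\beta}}(\R^N)$ by H\"older (using $|\mu|^{q-2}\mu\in L^{\frac{2Nq}{(2N-\beta)(q-1)}}(\R^N)$ and $h\in L^{\frac{2Nq}{2N-\beta}}(\R^N)$), yields $T_n^{(2)}\to 0$.

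For $T_n^{(1)}$, I would use the symmetry of the Riesz convolution to rewrite
\[
T_n^{(1)}=\int_{\R^N}|\mu_n|^q\cdot |x|^{-\beta}*\bigl[(|\mu_n|^{q-2}\mu_n-|\mu|^{q-2}\mu)h\bigr],
\]
and then prove that $v_n:=(|\mu_n|^{q-2}\mu_n-|\mu|^{q-2}\mu)h\to 0$ strongly in $L^{\frac{2N}{2N-\beta}}(\R^N)$. Pointwise convergence is immediate from $\mu_n\to\mu$ a.e. For strong convergence I would invoke Vitali's theorem: using $|v_n|\leq C(|\mu_n|^{q-1}+|\mu|^{q-1})|h|$ together with H\"older's inequality with exponents $q/(q-1)$ and $q$ gives
\[
\int_E |v_n|^{\frac{2N}{2N-\beta}}\leq C\Bigl(\int_E |h|^{\frac{2Nq}{2N-\beta}}\Bigr)^{\frac{1}{q}}
\]
with $C$ independent of $n$. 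This estimate delivers both equi-integrability (taking $|E|$ small) and tightness (taking $E=\{|x|>R\}$ with $R$ large), since $|h|^{\frac{2Nq}{2N-\beta}}\in L^1(\R^N)$. Once $v_n\to 0$ in $L^{\frac{2N}{2N-\beta}}(\R^N)$, a second application of \eqref{hli} upgrades this to $|x|^{-\beta}*v_n\to 0$ strongly in $L^{\frac{2N}{\beta}}(\R^N)$, and pairing with $|\mu_n|^q$ bounded in the dual space closes $T_n^{(1)}\to 0$ via H\"older.

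The main obstacle is the Vitali step for $T_n^{(1)}$: no uniform pointwise dominator is available for $|\mu_n|^{q-1}|h|$, so the integrability and spatial decay of $|h|^{\frac{2Nq}{2N-\beta}}$ must carry all the smallness on small sets and at infinity, precisely via the H\"older splitting above. Everything else is a bookkeeping exercise in pairing the correct H\"older and HLS dual exponents, which work out because $\frac{2N-\beta}{2N}+\frac{\beta}{2N}=1$ and $\frac{(2N-\beta)(q-1)}{2Nq}+\frac{2N-\beta}{2Nq}=\frac{2N-\beta}{2N}$.
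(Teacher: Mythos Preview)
Your argument is correct and takes a genuinely different route from the paper's proof. The paper sets $\nu_n=\mu_n-\mu$, reduces to $h\ge 0$, and expands
\[
\intr \Big(|x|^{-\beta}*|\mu_n|^{q}\Big)|\mu_n|^{q-2}\mu_n h
\]
into four terms by writing $|\mu_n|^{q}=(|\mu_n|^{q}-|\nu_n|^{q})+|\nu_n|^{q}$ and similarly for $|\mu_n|^{q-2}\mu_n h$. The key input there is the local Brezis--Lieb Lemma~\ref{blc}, which gives \emph{strong} $L^{\frac{2N}{2N-\beta}}$ convergence of $|\mu_n|^{q}-|\nu_n|^{q}\to|\mu|^{q}$ and of $|\mu_n|^{q-2}\mu_n h-|\nu_n|^{q-2}\nu_n h\to|\mu|^{q-2}\mu h$; three of the four terms are then dispatched by pairing these strong limits with weak limits from Lemma~\ref{bogachev}, and the residual term $\intr(|x|^{-\beta}*|\nu_n|^{q})|\nu_n|^{q-2}\nu_n h$ is handled by showing $\||\nu_n|^{q-1}h\|_{\frac{2N}{2N-\beta}}\to 0$, which is essentially the same H\"older/weak-convergence mechanism you use in your Vitali step.

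Your two-term split is more economical: it avoids Lemma~\ref{blc} altogether and needs no reduction to $h\ge 0$. The weak--strong pairing for $T_n^{(2)}$ and the Vitali argument for $T_n^{(1)}$ (with equi-integrability and tightness both inherited from the single fixed function $|h|^{\frac{2Nq}{2N-\beta}}\in L^{1}$) are clean and self-contained. What the paper's approach buys, on the other hand, is structural uniformity with the proof of Lemma~\ref{nlocbl}: both results are obtained from the same Brezis--Lieb machinery, so once Lemma~\ref{blc} is in hand the two nonlocal convergence lemmas follow by the same template.
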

\begin{proof} 
 Assume that $h=h^+-h^-$ and $\nu_n=\mu_n-\mu$. We prove the lemma for $h\geq 0$. Using Lemma \ref{blc} by taking $s=\frac{2N}{2N-\beta}$ together with $(z_n,z)=(\mu_n,\mu)$ and $(z_n,z)=(\mu_nh^{1/c}, \mu h^{1/c})$ respectively in order to obtain
$$
\left\{
\begin{aligned}
&|\mu_n|^{q}-|\nu_n|^{q}\to |\mu|^{q} \\
&|\mu_n|^{{q}-2}\mu_n h- |\nu_n|^{{q}-2}\nu_n h\to |\mu|^{{q}-2}\mu h
\end{aligned}
\right.
\quad\mbox{ strongly in }\; L^{\frac{2N}{2N-\beta}}(\R^N).
$$
Using the Hardy-Littlewood-Sobolev inequality, we obtain
\begin{equation}\label{est00}
\left\{
\begin{aligned}
&|x|^{-\beta}*\Big(|\mu_n|^{q}-|\nu_n|^{q}\Big)\to |x|^{-\beta}*|\mu|^{q} \\
&|x|^{-\beta}*\Big(|\mu_n|^{{q}-2}\mu_nh-|\nu_n|^{{q}-2}\nu_nh\Big)\to |x|^{-\beta}*\Big(|\mu|^{{q}-2}\mu h\Big)
\end{aligned}
\right.
\quad\mbox{ strongly in }\; L^{\frac{2N}{\beta}}(\R^N).
\end{equation}
Next, using Lemma \ref{bogachev} we get
\begin{equation}\label{est01}
\left\{
\begin{aligned}
&|\mu_n|^{{q}-2}\mu_n h\rightharpoonup |\mu|^{{q}-2}\mu h\\ 
& |\nu_n|^{q}\rightharpoonup 0\\
&|\nu_n|^{{q}-2}\nu_nh\rightharpoonup 0
\end{aligned}
\right.
\quad \mbox{ weakly in }\; L^{\frac{2N}{2N-\beta}}(\R^N)
\end{equation}
By equation \eqref{est00} and \eqref{est01}, we have
\begin{equation}\label{est02}
\begin{aligned}
&\intr \Big[|x|^{-\beta}*\Big(|\mu_n|^{q}-|\nu_n|^{q}\Big)\Big]\Big(|\mu_n|^{{q}-2}\mu_n h-|\nu_n|^{{q}-2}\nu_n h\Big)\to \int_{\R^N} \Big(|x|^{-\beta}*|\mu|^{q}\Big)|\mu|^{{q}-2}\mu h,\\
& \intr \Big[|x|^{-\beta}*\Big(|\mu_n|^{q}-|\nu_n|^{q}\Big)\Big]|\nu_n|^{{q}-2}\nu_nh\to 0,\\
&\intr \Big[|x|^{-\beta}*\Big(|\mu_n|^{{q}-2}\mu_nh-|\nu_n|^{{q}-2}\nu_nh\Big)\Big]|\nu_n|^{q}\to 0.
\end{aligned}
\end{equation}
Using the Hardy-Littlewood-Sobolev inequality together with H\"older's inequality, we find
\begin{equation}\label{est03}
\begin{aligned}
\left| \intr \Big(|x|^{-\beta}*|\nu_n|^{{q}}\Big)|\nu_n|^{{q}-2}\nu_nh \right|& \leq \|\nu_n\|^{q}_{\frac{2N{q}}{2N-\beta}}\||\nu_n|^{{q}-1}h\|_{\frac{2N}{2N-\beta}}\\
&\leq {q} \||\nu_n|^{{q}-1}h\|_{\frac{2N}{2N-\beta}}.
\end{aligned}
\end{equation}
Next, by Lemma \ref{bogachev} we have $\nu_n^{\frac{2N(q-1)}{2N-\beta}}\rightharpoonup 0$ weakly in $L^{\frac{q}{q-1}}(\R^N)$. Therefore, 
$$
\||\nu_n|^{q-1}h\|_{\frac{2N}{2N-\beta}}=\left(\intr |\nu_n|^{\frac{2N(q-1)}{2N-\beta}}|h|^{\frac{2N}{2N-\beta}}  \right)^{\frac{2N-\beta}{2N}}\to 0.
$$
Hence, using \eqref{est03} we have
\begin{equation}\label{est04}
\lim_{n\to \infty} \intr \Big(|x|^{-\beta}*|\nu_n|^{q}\Big)|\nu_n|^{q-2}\nu_nh=0.
\end{equation}
Also, one could notice that
\begin{equation}\label{bl1}
\begin{aligned}
\intr \Big(|x|^{-\beta}*|\mu_n|^{q}\Big)|\mu_n|^{{q}-2}\mu_nh=& \intr \Big[|x|^{-\beta}*\Big(|\mu_n|^{q}-|\nu_n|^{q}\Big)\Big]\Big(|\mu_n|^{{q}-2}\mu_nh-|\nu_n|^{{q}-2}\nu_nh\Big)\\
&+\intr \Big[|x|^{-\beta}*\Big(|\mu_n|^{q}-|\nu_n|^{q}\Big)\Big]|\nu_n|^{{q}-2}\nu_nh\\
&+\intr \Big[|x|^{-\beta}*\Big(|\mu_n|^{{q}-2}\mu_nh-|\nu_n|^{{q}-2}\nu_n h\Big)\Big]|\nu_n|^{q}\\
&+\intr \Big(|x|^{-\beta}*|\nu_n|^{q}\Big)|\nu_n|^{{q}-2}\nu_nh.
\end{aligned}
\end{equation}
 We get the desired result by passing to the limit in \eqref{bl1} together with \eqref{est02} and \eqref{est04}.
\end{proof}

\bigskip

We will now examine the ground state solutions to equation \eqref{fr} in the following section.

\medskip

\section{Proof of Theorem \ref{gstate}}
Here, in this section we will do the analysis of the Palais-Smale sequences for ${\mathcal J} \!\mid_{\mathcal N}$ and we will take the ideas from \cite{CM2016,CV2010} to prove that any Palais-Smale sequence of ${\mathcal J} \!\mid_{\mathcal N}$ is either converging strongly to its weak limit or differs from it by a finite number of sequences, which are the translated solutions of \eqref{squ}. 
Let us assume that $\lambda_2 > 0$. Then, for any $u,v\in E$  we have
\begin{equation*}
\begin{aligned}
\langle {\mathcal J}'(tu), tu \rangle &= t^{p}\|u\|_{E}^{p}- t^{ p_{*}(\alpha)}\int_{\R^{N}}|x|^{-\alpha}|u|^{p_{*}(\alpha)}-\lambda_{2} t^{2q}\int_{\R^{N}}\Big(|x|^{-\beta}*|u|^{q}\Big)|u|^{q},
\end{aligned}
\end{equation*}
where $t>0$.

The equation $\langle {\mathcal J}'(tu),tu \rangle= 0 $ has a unique positive solution $t=t(u)$ as $p_{*}(\alpha)>q> 1$. This unique positive solution is also known as the {\it projection of $u$} on ${\mathcal N}$. Next, we will be interested in the main properties of the Nehari manifold ${\mathcal N}$:

\begin{lemma}\label{nehari1}
\begin{itemize}
\item [(i)] Energy functional ${\mathcal J}$ is coercive, that is, ${\mathcal J} \geq c||u||_{E}^{p}$ for some constant $c>0$. 
\item [(ii)] ${\mathcal J} \!\mid_{\mathcal N}$ is bounded from below by a positive constant.
\end{itemize}
\end{lemma}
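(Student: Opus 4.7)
\medskip

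\noindent\textbf{Proof proposal.} The plan is to exploit the Nehari identity $\langle\mathcal{J}'(u),u\rangle=0$ to eliminate one of the nonlinear terms from the expression for $\mathcal{J}(u)$, choosing which one to eliminate so that the remaining coefficients have the right sign under the hypothesis $p_{*}(\alpha)>2q>p$ and $\lambda_2>0$.

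For part (i), I would start from the Nehari condition
\begin{equation*}
\lambda_2\int_{\R^N}\Big(|x|^{-\beta}*|u|^{q}\Big)|u|^{q} = \|u\|_{E}^{p}-\int_{\R^N}|x|^{-\alpha}|u|^{p_{*}(\alpha)},
\end{equation*}
and substitute it into \eqref{fr1} in order to get rid of the nonlocal integral. A short calculation yields
\begin{equation*}
\mathcal{J}(u)=\Big(\frac{1}{p}-\frac{1}{2q}\Big)\|u\|_{E}^{p}+\Big(\frac{1}{2q}-\frac{1}{p_{*}(\alpha)}\Big)\int_{\R^N}|x|^{-\alpha}|u|^{p_{*}(\alpha)}.
\end{equation*}
Because $p<2q<p_{*}(\alpha)$, both coefficients are strictly positive, and since the last integral is nonnegative we obtain
\begin{equation*}
\mathcal{J}(u)\geq\Big(\frac{1}{p}-\frac{1}{2q}\Big)\|u\|_{E}^{p}\quad\text{for every } u\in\mathcal{N}.
\end{equation*}
This gives the coercivity in (i) with the explicit constant $c=\frac{1}{p}-\frac{1}{2q}$.

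For part (ii), in view of (i) it suffices to bound $\|u\|_E$ away from zero on $\mathcal{N}$. Starting again from the Nehari identity $\|u\|_{E}^{p}=\int_{\R^N}|x|^{-\alpha}|u|^{p_{*}(\alpha)}+\lambda_2\int_{\R^N}(|x|^{-\beta}*|u|^{q})|u|^{q}$, I would apply the Hardy--Sobolev embedding constants $S_r$ introduced in the preliminaries to bound $\int|x|^{-\alpha}|u|^{p_{*}(\alpha)}\leq C_1\|u\|_E^{p_{*}(\alpha)}$, and the Hardy--Littlewood--Sobolev inequality \eqref{hli} combined with the embedding $E\hookrightarrow L^{\frac{2Nq}{2N-\beta}}(\R^N)$ (guaranteed by \eqref{p1}) to bound $\int(|x|^{-\beta}*|u|^{q})|u|^{q}\leq C_2\|u\|_E^{2q}$. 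This yields
\begin{equation*}
\|u\|_{E}^{p}\leq C_1\|u\|_{E}^{p_{*}(\alpha)}+\lambda_2 C_2\|u\|_{E}^{2q}.
\end{equation*}
Since $u\neq 0$, dividing by $\|u\|_{E}^{p}$ and using $p_{*}(\alpha)-p>0$ and $2q-p>0$ forces a uniform lower bound $\|u\|_{E}\geq\rho>0$ on $\mathcal{N}$. Combining with (i) gives $\mathcal{J}(u)\geq c\rho^{p}>0$, which is the desired lower bound.

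The computations are essentially routine; the only delicate point is the choice of substitution in (i). If one had instead eliminated the critical Hardy--Sobolev term using the Nehari identity, the coefficient of the remaining nonlocal integral would come out negative (since $\frac{1}{p_{*}(\alpha)}<\frac{1}{2q}$), which with $\lambda_2>0$ would destroy the lower bound. So the main thing to keep in mind is to eliminate the nonlocal term and rely on $p<2q<p_{*}(\alpha)$ to make both coefficients positive; once that is done, both parts follow from standard Sobolev and HLS estimates.
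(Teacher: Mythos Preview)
Your proof is correct and follows essentially the same route as the paper. For (i) the paper writes the identity as $\mathcal{J}(u)=\mathcal{J}(u)-\tfrac{1}{2q}\langle\mathcal{J}'(u),u\rangle$, which is exactly your substitution eliminating the nonlocal term and yields the same constant $c=\tfrac{1}{p}-\tfrac{1}{2q}$; for (ii) the paper likewise bounds the two nonlinear terms via $S_{p_*(\alpha)}$ and HLS plus the embedding $E\hookrightarrow L^{2Nq/(2N-\beta)}$ to get $\|u\|_E\geq C_1>0$ and then invokes (i).
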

\begin{proof} 
\begin{itemize}
\item [(i)] We have,
$$
\begin{aligned}
{\mathcal J}(u)&= {\mathcal J}(u)-\frac{1}{2q}\langle {\mathcal J}'(u), u \rangle \\
&=\Big(\frac{1}{p}-\frac{1}{2q}\Big)\|u\|_{E}^p+\Big(\frac{1}{2q}-\frac{1}{p_{*}(\alpha)}\Big)\int_{\R^{N}} |x|^{-\alpha}|u|^{p_{*}(\alpha)} dx \\
&\geq \Big(\frac{1}{p}-\frac{1}{2q}\Big)\|u\|_{E}^p.
\end{aligned}
$$
By taking $c=\frac{1}{p}-\frac{1}{2q}$ we could reach to our conclusion.

\smallskip

\item [(ii)] Here, we use the fact that the embeddings  $E \hookrightarrow L^{s}(\R^N)$ and  $E \hookrightarrow L^{\frac{2Nq}{2N-\beta}}(\R^N)$ are continuous, for $p \leq s \leq p^{*}$ together with the Hardy-Littlewood-Sobolev inequality. For any $u\in {\mathcal N}$, we have
$$
\begin{aligned}
0=\langle {\mathcal J}'(u),u\rangle& =\|u\|_{E}^p-\int_{\R^{N}}|x|^{-\alpha}|u|^{p_{*}(\alpha)}-\lambda_{2} \int_{\R^{N}}\Big(|x|^{-\beta}*|u|^{q}\Big)|u|^{q}\\
&\geq \|u\|_{E}^p-S^{-\frac{p_{*}(\alpha)}{p}}_{p_{*}(\alpha)}\|u\|_{E}^{p_{*}(\alpha)}-C_{0}\lambda_{2} \|u\|_{E}^{2q}.
\end{aligned}
$$
Then, there exists some constant $C_1>0$ such that
\begin{equation}\label{cnot}
\|u\|_{E}\geq C_1>0\quad\mbox{for all }u\in {\mathcal N}.
\end{equation}
Next, we use the fact that ${\mathcal J} \!\mid_{\mathcal N}$ is coercive together with\eqref{cnot} in order to obtain
$$
{\mathcal J}(u) \geq \Big(\frac{1}{p}-\frac{1}{2q}\Big) C_1^p>0.
$$
\end{itemize}
\end{proof}
	
\begin{lemma}\label{nehari2}
Any critical point $u$ of ${\mathcal J} \!\mid_{\mathcal N}$ will be a free critical point.
\end{lemma}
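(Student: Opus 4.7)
The plan is to invoke the Lagrange multiplier rule. Set $G(v)=\langle {\mathcal J}'(v),v\rangle$, so that ${\mathcal N}=G^{-1}(0)\setminus\{0\}$. First I would note that $G\in C^{1}(E,\R)$, which is immediate from ${\mathcal J}\in C^{1}(E)$ and the fact that the expressions used to define ${\mathcal N}$ are smooth in $u$. If $u\in {\mathcal N}$ is a constrained critical point of ${\mathcal J}|_{\mathcal N}$, there exists $\mu\in\R$ with
$$
{\mathcal J}'(u)=\mu\, G'(u).
$$
Pairing this identity with $u$ and using $\langle {\mathcal J}'(u),u\rangle=0$, the job reduces to showing $\langle G'(u),u\rangle\neq 0$, since then $\mu=0$ and ${\mathcal J}'(u)=0$ in $E^{*}$.

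To compute $\langle G'(u),u\rangle$, I would expand
$$
G(tu)=t^{p}\|u\|_{E}^{p}-t^{p_{*}(\alpha)}\int_{\R^{N}}|x|^{-\alpha}|u|^{p_{*}(\alpha)}-\lambda_{2}t^{2q}\int_{\R^{N}}\bigl(|x|^{-\beta}*|u|^{q}\bigr)|u|^{q},
$$
differentiate in $t$, and evaluate at $t=1$ to get
$$
\langle G'(u),u\rangle = p\|u\|_{E}^{p}-p_{*}(\alpha)\int_{\R^{N}}|x|^{-\alpha}|u|^{p_{*}(\alpha)}-2q\lambda_{2}\int_{\R^{N}}\bigl(|x|^{-\beta}*|u|^{q}\bigr)|u|^{q}.
$$
Eliminating $\|u\|_{E}^{p}$ via the Nehari relation $\|u\|_{E}^{p}=\int_{\R^{N}}|x|^{-\alpha}|u|^{p_{*}(\alpha)}+\lambda_{2}\int_{\R^{N}}(|x|^{-\beta}*|u|^{q})|u|^{q}$ yields
$$
\langle G'(u),u\rangle=(p-p_{*}(\alpha))\int_{\R^{N}}|x|^{-\alpha}|u|^{p_{*}(\alpha)}+(p-2q)\lambda_{2}\int_{\R^{N}}\bigl(|x|^{-\beta}*|u|^{q}\bigr)|u|^{q}.
$$

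Under the standing assumptions $\lambda_{2}>0$ and $p_{*}(\alpha)>2q>p$, both coefficients are strictly negative while both integrals are nonnegative. Combined with the lower bound $\|u\|_{E}\geq C_{1}>0$ from Lemma~\ref{nehari1}(ii), the Nehari identity forces at least one of the two integrals to be strictly positive, so $\langle G'(u),u\rangle<0$. In particular $\mu=0$, which gives ${\mathcal J}'(u)=0$ and proves the lemma. The only delicate point is this sign bookkeeping: the structural step is the standard Lagrange multiplier argument, and the strict negativity of $\langle G'(u),u\rangle$ is guaranteed precisely by the gap $p<2q<p_{*}(\alpha)$ together with the quantitative lower bound for elements of ${\mathcal N}$.
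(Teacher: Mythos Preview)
Your proof is correct and follows essentially the same approach as the paper: both use the Lagrange multiplier rule with the constraint functional $G(u)=\langle {\mathcal J}'(u),u\rangle$ (the paper calls it ${\mathcal K}$), and both show $\langle G'(u),u\rangle<0$ on ${\mathcal N}$ to force the multiplier to vanish. The only cosmetic difference is that the paper eliminates the convolution term via the Nehari identity to obtain $(p-2q)\|u\|_{E}^{p}+(2q-p_{*}(\alpha))\int_{\R^{N}}|x|^{-\alpha}|u|^{p_{*}(\alpha)}\leq -(2q-p)\|u\|_{E}^{p}<-(2q-p)C_{1}$, whereas you eliminate $\|u\|_{E}^{p}$ instead; the two manipulations are algebraically equivalent.
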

\begin{proof}
Let $ {\mathcal K}(u)=\langle {\mathcal J}'(u),u\rangle $ for any $u \in E$. By using \eqref{cnot}, one could get
\begin{equation}\label{cnot1}
\begin{aligned}
\langle {\mathcal K}'(u),u\rangle&=p\|u\|^p-p_{*}(\alpha)\int_{\R^{N}}|x|^{-\alpha}|u|^{p_{*}(\alpha)}-2q \lambda_{2}\int_{\R^{N}}\Big(|x|^{-\beta}*|u|^{q}\Big)|u|^{q}\\
&=(p-2q)\|u\|_{E}^p+(2q-p_{*}(\alpha))\int_{\R^{N}}|x|^{-\alpha}|u|^{p_{*}(\alpha)}\\
&\leq -(2q-p)\|u\|_{E}^p\\
&<-(2q-p)C_1,
\end{aligned}
\end{equation}
for any $u \in {\mathcal N}$. Suppose that $u$ is a critical point of ${\mathcal J}$ in ${\mathcal N}$. By the Lagrange multiplier theorem, we obtain that there exists $\epsilon \in \R$ such that ${\mathcal J}'(u)=\epsilon {\mathcal K}'(u)$. Therefore, we have $\langle {\mathcal J} '(u),u\rangle=\epsilon \langle {\mathcal K}'(u),u\rangle$. As $\langle {\mathcal K}'(u),u\rangle<0$, so $\epsilon=0$ and further, we get ${\mathcal J}'(u)=0$.
	
\end{proof}

\begin{lemma}\label{nehari3}
Let us suppose the sequence $(u_n)$ is a $(PS)$ sequence for ${\mathcal J} \!\mid_{\mathcal N}$. Then, it is also a $(PS)$ sequence for ${\mathcal J}$.
\end{lemma}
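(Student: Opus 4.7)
The plan is to use the Lagrange multiplier argument, in the same spirit as the proof of Lemma \ref{nehari2}, but now with a quantitative (asymptotic) twist because we only know $({\mathcal J}\!\mid_{\mathcal N})'(u_n)\to 0$ rather than equals zero. Write ${\mathcal K}(u)=\langle {\mathcal J}'(u),u\rangle$ as before, so that ${\mathcal N}={\mathcal K}^{-1}(0)\setminus\{0\}$. By the Lagrange multiplier rule applied along the constraint manifold, there exists a sequence $(\epsilon_n)\subset\R$ such that
\[
{\mathcal J}'(u_n)=\epsilon_n {\mathcal K}'(u_n)+o(1)\quad\text{in }E^{*}.
\]
The whole point is then to show $\epsilon_n\to 0$; once this is proved, boundedness of ${\mathcal K}'(u_n)$ in $E^{*}$ will give ${\mathcal J}'(u_n)\to 0$ in $E^{*}$, which is exactly the claim.

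First I would establish boundedness of $(u_n)$ in $E$. Since $(u_n)\subset{\mathcal N}$, Lemma \ref{nehari1}(i) gives ${\mathcal J}(u_n)\geq c\|u_n\|_{E}^{p}$, and the Palais-Smale hypothesis bounds $\mathcal J(u_n)$, so $(u_n)$ is bounded. Combined with Lemma \ref{nehari1}(ii) and the lower bound \eqref{cnot}, we have $C_1\leq \|u_n\|_E\leq C_2$. Now test the Lagrange identity against $u_n$:
\[
\langle {\mathcal J}'(u_n),u_n\rangle =\epsilon_n \langle {\mathcal K}'(u_n),u_n\rangle + o(\|u_n\|_E).
\]
The left-hand side is identically $0$ because $u_n\in {\mathcal N}$, and $o(\|u_n\|_E)=o(1)$ by the upper bound on $\|u_n\|_E$. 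On the other hand, the estimate \eqref{cnot1} in the proof of Lemma \ref{nehari2}, reused verbatim, yields
\[
\langle {\mathcal K}'(u_n),u_n\rangle\leq -(2q-p)\|u_n\|_{E}^{p}\leq -(2q-p)C_1^{p}<0,
\]
so $\langle {\mathcal K}'(u_n),u_n\rangle$ stays bounded away from zero uniformly in $n$. Dividing the identity by this quantity forces $\epsilon_n\to 0$.

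Finally I would check that $\|{\mathcal K}'(u_n)\|_{E^{*}}$ is uniformly bounded: differentiating ${\mathcal K}$ produces the terms $p\|u_n\|_E^{p-2}$-type, $p_{*}(\alpha)$ powers with the weight $|x|^{-\alpha}$, and the nonlocal Choquard term, each of which is controlled on bounded sets of $E$ by the continuous embeddings $E\hookrightarrow L^{s}(\R^N)$ for $p\leq s\leq p_{*}$ and the Hardy-Littlewood-Sobolev inequality \eqref{hli} under the range \eqref{p1}. Hence $\epsilon_n {\mathcal K}'(u_n)\to 0$ in $E^{*}$ and therefore ${\mathcal J}'(u_n)\to 0$ in $E^{*}$, which combined with the boundedness of $({\mathcal J}(u_n))$ says exactly that $(u_n)$ is a $(PS)$ sequence for the free functional ${\mathcal J}$.

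I do not expect a serious obstacle here: the only delicate piece is the strict sign of $\langle {\mathcal K}'(u_n),u_n\rangle$ away from zero, and that is already handed to us for free by the computation \eqref{cnot1} together with the lower bound \eqref{cnot}, both of which crucially used the assumption $2q>p$ appearing in Theorems \ref{nonexis} and \ref{gstate}.
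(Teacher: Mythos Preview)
Your proposal is correct and follows essentially the same Lagrange-multiplier route as the paper: bound $(u_n)$ via coercivity, write ${\mathcal J}'(u_n)=\epsilon_n{\mathcal K}'(u_n)+o(1)$, test against $u_n$, and use \eqref{cnot1} to force $\epsilon_n\to 0$. The only difference is that you explicitly verify the uniform boundedness of $\|{\mathcal K}'(u_n)\|_{E^*}$ before concluding $\epsilon_n{\mathcal K}'(u_n)\to 0$, a step the paper leaves implicit.
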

\begin{proof}
Suppose that $(u_n)\subset {\mathcal N}$ is a $(PS)$ sequence for ${\mathcal J} \!\mid_{\mathcal N}$. As,
$$
{\mathcal J}(u_n)\geq \Big(\frac{1}{p}-\frac{1}{2q}\Big)\|u_n\|_{E}^p,
$$
that is, $(u_n)$ is bounded in $E$. Next, we need to prove that ${\mathcal J}'(u_n)\to 0$. We could notice that,
$$
{\mathcal J}'(u_n)- \epsilon_n {\mathcal K}'(u_n)= {\mathcal J}' \!\mid_{\mathcal N}(u_n)= o(1),
$$
for some $\epsilon_n \in \R$, which yields
$$
\epsilon_n \langle {\mathcal K}'(u_n),u_n \rangle= \langle {\mathcal J} '(u_n),u_n \rangle + o(1)= o(1).
$$
Using \eqref{cnot1}, we get $\epsilon_n \to 0$ and therefore, we obtain ${\mathcal J}'(u_n) \to 0$.
\end{proof}

\subsection{Compactness}\label{compc}
We define the energy functional ${\mathcal I}: E \to \R$ by
$$
{\mathcal I}(u)=\frac{1}{p}\|u\|_{E}^{p}-\frac{1}{p_{*}(\alpha)}\intr |x|^{-\alpha}|u|^{p_{*}(\alpha)},
$$
and the corresponding Nehari manifold for ${\mathcal I}$ by
$$
{\mathcal N}_{{\mathcal I}}=\{u\in E \setminus\{0\}: \langle {\mathcal I}'(u),u\rangle=0\}.
$$
Let
$$
m_{\mathcal I}=\inf_{u\in {\mathcal N}_{\mathcal I}}{\mathcal I}(u).
$$

And we have,
\begin{equation*}
\langle {\mathcal I}'(u),u\rangle= \|u\|_{E}^p-\int_{\R^{N}}|x|^{-\alpha}|u|^{p_{*}(\alpha)}.
\end{equation*}

\begin{lemma}\label{compact}
Let $(u_n)\subset{\mathcal N}_{\mathcal I}$ is a $(PS)$ sequence of ${\mathcal J} \!\mid_{{\mathcal N}}$, that is, $({\mathcal J}(u_n))$ is bounded and ${\mathcal J}'\!\mid_{{\mathcal N}}(u_n)\to 0$ strongly in $H_{0}^{-p}(\R^N)$. Then, there exists a solution $u\in E$ of \eqref{fr} such that, when we replace the sequence $(u_n)$ with the subsequence, one could have either of the following alternatives:
	
\smallskip
	
\noindent $(i)$ $u_n\to u$ strongly in $E$;
	
or
	
\smallskip
	
\noindent $(ii)$ $u_n\rightharpoonup u$ weakly in $E$. Further, there exists a positive integer $l\geq 1$ and $l$ nontrivial  weak solutions to \eqref{squ}, that is, $l$ functions $u_1,u_2,\dots, u_l\in E$ and $l$ sequences of points $(w_{n,1})$, $(w_{n,2})$, $\dots$, $(w_{n,l})\subset \R^N$ such that the following conditions hold:
\begin{enumerate}
\item[(a)] $|w_{n,j}|\to \infty$ and $|w_{n,j}-w_{n,i}|\to \infty$  if $i\neq j$, $n\to \infty$;
\item[(b)] $ u_n-\sum_{j=1}^lu_j(\cdot+w_{n,j})\to u$ in $E$;
\item[(c)] $ {\mathcal J}(u_n)\to {\mathcal J}(u)+\sum_{j=1}^l {\mathcal I}(u_j)$.
\end{enumerate}
\end{lemma}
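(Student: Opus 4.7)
The argument is a Struwe-type profile decomposition, iterating Lions' concentration-compactness dichotomy and handling the nonlocal terms via the Brezis--Lieb identities collected in Section~2.

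I would first extract a weak limit. Lemma~\ref{nehari1}(i) together with boundedness of $(\mathcal{J}(u_n))$ gives $\|u_n\|_E\leq C$, so up to a subsequence $u_n\rightharpoonup u$ in $E$, $u_n\to u$ in $L^s_{\mathrm{loc}}(\R^N)$ for $p\leq s<p_*(\alpha)$, and $u_n\to u$ a.e. Weak continuity of $\Delta_p^2$, dominated convergence for the Hardy term (the Rellich inequality providing a uniform dominant), and Lemma~\ref{anbl} for the Choquard term then allow passage to the limit in $\langle\mathcal{J}'(u_n),\varphi\rangle$ for every $\varphi\in C_c^\infty(\R^N)$, identifying $u$ as a weak solution of \eqref{fr}.

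If $u_n\to u$ strongly in $E$, alternative $(i)$ holds. Otherwise set $v_n^{(1)}:=u_n-u$, so $v_n^{(1)}\rightharpoonup 0$. The local Brezis--Lieb Lemma~\ref{blc} applied to the principal part and to the critical Hardy--Sobolev term, combined with the nonlocal Brezis--Lieb Lemma~\ref{nlocbl} for the Choquard functional, produces
\begin{equation*}
\mathcal{J}(u_n)=\mathcal{J}(u)+\mathcal{I}(v_n^{(1)})+o(1),\qquad \mathcal{J}'(u_n)-\mathcal{I}'(v_n^{(1)})=o(1)\ \text{in}\ E^{-1},
\end{equation*}
where the Hardy and Choquard contributions of $v_n^{(1)}$ drop out by weak-continuity arguments (the Hardy functional via the Rellich inequality combined with the compact embedding into $L^s_{\mathrm{loc}}$ for $s<p_*(\alpha)$, the Choquard functional via Lemmas~\ref{nlocbl}--\ref{anbl}). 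Hence $(v_n^{(1)})$ is an asymptotic Palais--Smale sequence for $\mathcal{I}$.

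I then apply Lemma~\ref{cc} to $v_n^{(1)}$ with $e=p_*(\alpha)$. If the supremum over unit balls vanishes, then $v_n^{(1)}\to 0$ in $L^{p_*(\alpha)}(\R^N)$ and the Nehari-type identity $\langle \mathcal{I}'(v_n^{(1)}),v_n^{(1)}\rangle=o(1)$ forces $\|v_n^{(1)}\|_E\to 0$, giving $(i)$. Otherwise there exist $\delta>0$ and $w_{n,1}\in\R^N$ with $\int_{B_1(w_{n,1})}|v_n^{(1)}|^{p_*(\alpha)}\geq\delta$, so the translates $v_n^{(1)}(\cdot+w_{n,1})$ have a nontrivial weak limit $u_1\in E$; since $v_n^{(1)}\rightharpoonup 0$, the sequence $(w_{n,1})$ cannot remain bounded, hence $|w_{n,1}|\to\infty$. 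Passing to the weak limit in the translated equation $\mathcal{I}'(v_n^{(1)})=o(1)$ identifies $u_1$ as a nontrivial solution of \eqref{squ}. Setting $v_n^{(2)}:=v_n^{(1)}-u_1(\cdot-w_{n,1})$ and iterating gives the full decomposition; the iteration terminates because every nontrivial solution of \eqref{squ} satisfies $\mathcal{I}(u_j)\geq m_{\mathcal{I}}>0$ (the same argument as Lemma~\ref{nehari1}(ii) applied to $\mathcal{I}$), so finitely many profiles exhaust the bounded energy.

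The main obstacle is the lack of translation invariance of the Hardy potential $|x|^{-2p}$, of the singular critical weight $|x|^{-\alpha}$, and of the Choquard interaction $|x|^{-\beta}\ast|\cdot|^q$. One must show \emph{simultaneously} that these non-invariant terms are captured entirely by the weak limit $u$ in the Brezis--Lieb splitting, and that after translation by $|w_{n,j}|\to\infty$ they collapse so that each bubble solves the limit equation \eqref{squ}; at every inductive stage the verification of the cumulative energy identity
\begin{equation*}
\mathcal{J}(u_n)=\mathcal{J}(u)+\sum_{j=1}^{l}\mathcal{I}(u_j)+\mathcal{I}(v_n^{(l+1)})+o(1)
\end{equation*}
together with $\|u_n-\sum_{j=1}^l u_j(\cdot+w_{n,j})\|_E\to 0$ is precisely what the nonlocal Brezis--Lieb lemmas of Section~2 are designed to handle.
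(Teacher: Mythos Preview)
Your approach mirrors the paper's: extract a weak limit, perform the Brezis--Lieb splitting $\mathcal{J}(u_n)=\mathcal{J}(u)+\mathcal{I}(v_n^{(1)})+o(1)$, invoke Lions' dichotomy to locate concentration centres, and iterate, terminating because each profile carries energy at least $m_{\mathcal I}>0$. The paper does not treat the vanishing alternative explicitly (it simply asserts $\Gamma>0$), so your argument is in fact slightly more complete there.

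One correction: the Hardy potential does not ``drop out'' in passing from $\mathcal{J}$ to $\mathcal{I}$, since it is built into the norm $\|\cdot\|_E$ appearing in \emph{both} functionals. Only the Choquard contribution of $v_n^{(1)}$ must be shown to vanish in the splitting, and Lemmas~\ref{nlocbl}--\ref{anbl} by themselves give a Brezis--Lieb identity, not vanishing; what is actually needed is $v_n^{(1)}\to 0$ in $L^{2Nq/(2N-\beta)}(\R^N)$, which the paper obtains (implicitly) from its stated compact embedding $E\hookrightarrow L^s(\R^N)$ for $p\le s<p_*$. Your parenthetical justification via ``compact embedding into $L^s_{\mathrm{loc}}$'' would not suffice, since the Choquard integral is global.
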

\begin{proof}
Since, $(u_n)\in E$ is a bounded sequence. So, there exists $u\in E$ such that, up to a subsequence, we have
\begin{equation}\label{firstconv}
\left\{
\begin{aligned}
u_n& \rightharpoonup u \quad\mbox{ weakly in }E,\\
u_n &\rightharpoonup u\quad\mbox{ weakly in }L^s(\R^N),\; p\leq s\leq p_{*},\\
u_n & \to u\quad\mbox{ a.e. in }\R^N.
\end{aligned}
\right.
\end{equation}
Using Lemma \ref{anbl} and \eqref{firstconv}, we get   $${\mathcal J}'(u)=0,$$ which gives us that $u\in E$ is a solution of \eqref{fr}. In case if $u_n\to u$ strongly in $E$, then $(i)$ holds.

\medskip
	
Let us assume that $(u_n)\in E$ does not converge strongly to $u$ and define $f_{n,1}=u_n-u$. Then $(f_{n,1})$ converges weakly (not strongly) to zero in $E$ and using Brezis-Lieb Lemma (see \cite{B1983}), we have
\begin{equation}\label{bl2}
\|u_n\|_{E}^{p_{*}(\alpha)}=\|u\|_{E}^{p_{*}(\alpha)}+\|f_{n,1}\|_{E}^{p_{*}(\alpha)}+o(1).
\end{equation}
Therefore,
\begin{equation*}
\lim_{n\rightarrow \infty} \int_{\R^N} \Big(|x|^{-\alpha} |u_n|^{p_{*}(\alpha)} - |x|^{-\alpha} |f_{n,1}|^{p_{*}(\alpha)}\Big) dx =\intr |x|^{-\alpha} |u|^{p_{*}(\alpha)} dx,
\end{equation*}
which implies
\begin{equation}\label{bl3}
\int_{\R^N}|x|^{-\alpha} |u_n|^{p_{*}(\alpha)} dx = \intr |x|^{-\alpha} |f_{n,1}|^{p_{*}(\alpha)} dx + \intr |x|^{-\alpha} |u|^{p_{*}(\alpha)} dx + o(1),
\end{equation}
Using equation \eqref{bl2} and \eqref{bl3} we have
\begin{equation}\label{est6}
{\mathcal J}(u_n)= {\mathcal J}(u)+{\mathcal I}(f_{n,1})+o(1).
\end{equation}
 Also, for any $h\in E$, we have
\begin{equation}\label{est7}
\langle{\mathcal I}'(f_{n,1}), h\rangle=o(1).
\end{equation}
Next, using Lemma \ref{nlocbl} we obtain
$$
\begin{aligned}
0=\langle {\mathcal J}'(u_n), u_n \rangle&=\langle {\mathcal J}'(u),u\rangle+\langle {\mathcal I}'(f_{n,1}), f_{n,1} \rangle+o(1)\\
&=\langle{\mathcal I}'(f_{n,1}), f_{n,1}\rangle+o(1).
\end{aligned}
$$
Hence,
\begin{equation}\label{est8}
\langle {\mathcal I}'(f_{n,1}), f_{n,1}\rangle=o(1).
\end{equation}
Also, we have
$$
\Gamma:=\limsup_{n\to \infty}\Big(\sup_{v\in \R^N} \int_{B_1(v)}|f_{n,1}|^{p_{*}(\alpha)}\Big)> 0.
$$

Therefore, one could find $w_{n,1}\in \R^N$ such that
\begin{equation}\label{est9}
\int_{B_1(w_{n,1})}|f_{n,1}|^{p_{*}(\alpha)}>\frac{\Gamma}{2}.
\end{equation}
So, for any sequence $(f_{n,1}(\cdot+w_{n,1}))$, there exists $u_1\in E$ such that, up to a subsequence, we have 
$$
\begin{aligned}
f_{n,1}(\cdot+w_{n,1})&\rightharpoonup u_1\quad\mbox{ weakly in } E,\\
f_{n,1}(\cdot+w_{n,1})&\to u_1\quad\mbox{ strongly in } L_{loc}^{p_{*}(\alpha)}(\R^N),\\
f_{n,1}(\cdot+w_{n,1})&\to u_1\quad\mbox{ a.e. in } \R^N.
\end{aligned}
$$
Next, by passing to the limit in \eqref{est9} we get
$$
\int_{B_1(0)}|u_{1}|^{p_{*}(\alpha)}\geq \frac{\beta}{2},
$$
which gives us, $u_1\not\equiv 0$. Since $(f_{n,1}) \rightharpoonup 0$ weakly in $E$, we get that $(w_{n,1})$ is unbounded. Therefore, passing to a subsequence, we obtain $|w_{n,1}|\to \infty$. Next, by using \eqref{est8}, we have ${\mathcal I}'(u_1)=0$, which implies that $u_1$ is a nontrivial solution of \eqref{squ}.
Next, we define
$$
f_{n,2}(x)=f_{n,1}(x)-u_1(x-w_{n,1}).
$$
Similarly as before, we have
$$
\|f_{n,1}\|^{p_{*}(\alpha)}=\|u_1\|^{p_{*}(\alpha)}+\|f_{n,2}\|^{p_{*}(\alpha)}+o(1).
$$
And
$$
\int_{\R^N} |x|^{-\alpha} |f_{n,1}|^{p_{*}(\alpha)} =\int_{\R^N} |x|^{-\alpha} |u_1|^{p_{*}(\alpha)}+\intr |x|^{-\alpha} |f_{n,2}|^{p_{*}(\alpha)}+o(1). 
$$
Therefore,
$$
{\mathcal I}(f_{n,1})={\mathcal I}(u_1)+{\mathcal I}(f_{n,2})+o(1).
$$
Using \eqref{est6}, we have
$$
{\mathcal J} (u_n)= {\mathcal J} (u)+{\mathcal I}(u_1)+{\mathcal I}(f_{n,2})+o(1).
$$
By using the same approach as above, we have
$$
\langle {\mathcal I}'(f_{n,2}),h\rangle =o(1)\quad\mbox{ for any }h\in E
$$
and
$$
\langle {\mathcal I}'(f_{n,2}), f_{n,2}\rangle =o(1).
$$
Now, if $(f_{n,2}) \to 0$ strongly, then we could take $l=1$ in the Lemma \ref{compact} to conclude the proof.

\smallskip 

Suppose that $f_{n,2}\rightharpoonup 0$ weakly (not strongly) in $E$ and we iterate the process $l$ times and we could find a set of sequences $(w_{n,j})\subset \R^N$, $1\leq j\leq l$ with 
$$
|w_{n,j}|\to \infty\quad\mbox{  and }\quad |w_{n,i}-w_{n,j}|\to \infty\quad\mbox{  as }\; n\to \infty, i\neq j
$$
and $l$ nontrivial solutions  $u_1$, $u_2$, $\dots$, $u_l\in E$ of \eqref{squ} such that, by letting
$$
f_{n,j}(x):=f_{n,j-1}(x)-u_{j-1}(x-w_{n,j-1})\,, \quad 2\leq j\leq l,
$$ 
we get
$$
f_{n,j}(x+w_{n,j})\rightharpoonup u_j\quad\mbox{weakly in }\; E
$$
and
$$
{\mathcal J}(u_n)= {\mathcal J}(u)+\sum_{j=1}^l {\mathcal I}(u_j)+{\mathcal I}(y_{n,l})+o(1).
$$
As ${\mathcal J} (u_n)$ is bounded and ${\mathcal I}(u_j)\geq b_{\mathcal I}$, we get the desired result by iterating the process a finite number of times.
\end{proof}

\begin{lemma}\label{corr1} 
For any $e\in (0,m_{\mathcal I})$, any sequence $(u_n)$ which is a $(PS)_e$ sequence of ${\mathcal J} \! \mid_{{\mathcal N}} $, is also relatively compact.  
\end{lemma}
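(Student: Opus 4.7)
The plan is to combine the compactness lemma already established in Lemma \ref{compact} with a simple energy comparison that forces the decomposition to be trivial when $e$ sits below the threshold $m_{\mathcal I}$.

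First, I would observe that since $(u_n) \subset {\mathcal N}$ is a $(PS)_e$ sequence of ${\mathcal J}\!\mid_{\mathcal N}$, Lemma \ref{nehari3} upgrades it to a $(PS)_e$ sequence of the free functional ${\mathcal J}$ on $E$. In particular, $(u_n)$ is bounded in $E$ by the coercivity of ${\mathcal J}\!\mid_{\mathcal N}$ (Lemma \ref{nehari1}(i)), so Lemma \ref{compact} applies and yields a weak limit $u \in E$ with ${\mathcal J}'(u)=0$, together with the dichotomy between strong convergence (alternative $(i)$) and a nontrivial profile decomposition into $l \geq 1$ nonzero solutions $u_1, \dots, u_l$ of \eqref{squ} (alternative $(ii)$).

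The heart of the argument is to exclude $(ii)$. Suppose for contradiction that $(ii)$ holds. Since $u$ is a critical point of ${\mathcal J}$, the standard identity
$$
{\mathcal J}(u) = {\mathcal J}(u) - \frac{1}{2q}\langle {\mathcal J}'(u), u \rangle = \Big(\frac{1}{p} - \frac{1}{2q}\Big)\|u\|_{E}^{p} + \Big(\frac{1}{2q} - \frac{1}{p_{*}(\alpha)}\Big)\int_{\R^N} |x|^{-\alpha}|u|^{p_{*}(\alpha)}
$$
is nonnegative under the hypothesis $p < 2q < p_{*}(\alpha)$, hence ${\mathcal J}(u) \geq 0$. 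Similarly, each $u_j$ is a nontrivial critical point of ${\mathcal I}$, so $u_j \in {\mathcal N}_{\mathcal I}$ and therefore ${\mathcal I}(u_j) \geq m_{\mathcal I}$. Passing to the limit in condition $(c)$ of Lemma \ref{compact} we obtain
$$
e = \lim_{n\to\infty} {\mathcal J}(u_n) = {\mathcal J}(u) + \sum_{j=1}^{l} {\mathcal I}(u_j) \geq l\, m_{\mathcal I} \geq m_{\mathcal I},
$$
which contradicts the assumption $e < m_{\mathcal I}$.

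Therefore only alternative $(i)$ of Lemma \ref{compact} can occur, meaning that $u_n \to u$ strongly in $E$ up to a subsequence, which is precisely relative compactness of $(u_n)$. The only delicate point I foresee is confirming that the energy of the weak limit $u$ is nonnegative even when $u = 0$ (trivially $0$) and when $u \neq 0$ is a nontrivial free critical point (guaranteed by the identity above thanks to $p < 2q < p_{*}(\alpha)$); no other subtle ingredient is needed, the rest being bookkeeping on top of Lemma \ref{compact}.
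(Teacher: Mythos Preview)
Your proof is correct and follows essentially the same route as the paper: invoke Lemma \ref{compact}, observe that each profile satisfies ${\mathcal I}(u_j)\geq m_{\mathcal I}$, and conclude that alternative $(ii)$ forces $e\geq m_{\mathcal I}$, a contradiction. The paper's version is terser and does not spell out the step ${\mathcal J}(u)\geq 0$ that you justify via the identity ${\mathcal J}(u)-\frac{1}{2q}\langle {\mathcal J}'(u),u\rangle\geq 0$; your explicit verification of this point is a welcome addition rather than a deviation.
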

\begin{proof}
 We assume $(u_n)$ is a $(PS)_e$ sequence of ${\mathcal J}$ in ${{\mathcal N}}$. Using Lemma \ref{compact} we have ${\mathcal I}(u_j)\geq m_{\mathcal I}$. Therefore, upto a subsequence $u_n\to u$ strongly in $E$, which gives us that $u$ is a solution of \eqref{fr}. 
\end{proof}

\medskip

In order to finish the proof of Theorem \ref{gstate}, we need the following result.

\begin{lemma}\label{flg}
$$
m<m_{\mathcal I}.
$$
\end{lemma}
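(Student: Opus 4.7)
The plan is to compare $m$ and $m_{\mathcal{I}}$ by projecting a (near-)minimizer of $m_{\mathcal{I}}$ onto the Nehari manifold $\mathcal{N}$ of $\mathcal{J}$ and exploiting $\lambda_2>0$ to gain a strict energy drop.

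First I would fix $w\in\mathcal{N}_{\mathcal{I}}$ with $\mathcal{I}(w)=m_{\mathcal{I}}$, and introduce
$$
B(w)=\int_{\R^N}|x|^{-\alpha}|w|^{p_{*}(\alpha)},\qquad A(w)=\int_{\R^N}\bigl(|x|^{-\beta}*|w|^{q}\bigr)|w|^{q}.
$$
The Nehari identity $w\in\mathcal{N}_{\mathcal{I}}$ reads $\|w\|_E^p=B(w)$, and $A(w)>0$ since $w\not\equiv 0$. The projection of $w$ onto $\mathcal{N}$ is the unique positive root $t_w$ of
$$
t^{p_{*}(\alpha)-p}B(w)+\lambda_2 t^{2q-p}A(w)=\|w\|_E^p=B(w),
$$
with uniqueness following from strict monotonicity of the left-hand side (here I use $p_{*}(\alpha)>p$ and $2q>p$). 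Evaluating the left-hand side at $t=1$ yields $B(w)+\lambda_2 A(w)>B(w)$, forcing $t_w<1$.

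Next, on $\mathcal{N}_{\mathcal{I}}$ the scalar function $t\mapsto\mathcal{I}(tw)=B(w)\bigl(\tfrac{t^p}{p}-\tfrac{t^{p_{*}(\alpha)}}{p_{*}(\alpha)}\bigr)$ attains its unique maximum at $t=1$ with value $m_{\mathcal{I}}$. Therefore $\mathcal{I}(t_w w)<m_{\mathcal{I}}$, and
$$
m\leq \mathcal{J}(t_w w)=\mathcal{I}(t_w w)-\frac{\lambda_2 t_w^{2q}}{2q}A(w)<\mathcal{I}(t_w w)<m_{\mathcal{I}},
$$
which is the desired strict inequality.

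The main obstacle is the implicit assumption that $m_{\mathcal{I}}$ is actually attained: the level $m_{\mathcal{I}}$ is governed by the critical Hardy-Sobolev constant $S_{p_{*}(\alpha)}$, and in the presence of the Hardy perturbation its attainment is delicate. If no minimizer is available, the standard remedy is to replace $w$ by a minimizing sequence $(w_n)\subset\mathcal{N}_{\mathcal{I}}$ and project $t_n w_n\in\mathcal{N}$; one then needs a quantitative lower bound on $A(w_n)$ (keeping $t_n$ bounded away from $1$) so that the strict inequality survives passage to the limit. This would be carried out by an explicit test-function computation, using a rescaled extremal of the weighted Sobolev inequality together with a suitable cutoff, for which the nonlocal integral $A(w_n)$ can be estimated directly and shown to stay bounded below.
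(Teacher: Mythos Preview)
Your proposal is correct and follows essentially the same route as the paper: take a ground state $R\in\mathcal{N}_{\mathcal{I}}$ of the limiting problem, project it onto $\mathcal{N}$ via the unique scaling $t<1$, and compare energies (the paper's final chain of inequalities is just an algebraic rearrangement of your estimate $\mathcal{J}(t_w w)=\mathcal{I}(t_w w)-\tfrac{\lambda_2 t_w^{2q}}{2q}A(w)<\mathcal{I}(w)=m_{\mathcal{I}}$). The attainment issue you flag is handled in the paper simply by citing \cite{AN2016} for the existence of a ground state of the limiting problem, so your fallback via minimizing sequences and explicit test functions is not invoked.
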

\begin{proof}
Let us denote the ground state solution of \eqref{squ} by $R\in E$ and such solution exists (see \cite{AN2016} and references therein). Suppose that the projection of $R$ on ${\mathcal N}$ is $tR$, that is, $t=t(R)>0$ is the unique real number such that $tR\in {\mathcal N}$. Since, $R\in {\mathcal N}_{\mathcal I}$ and $tR\in {\mathcal N}$, we obtain
\begin{equation}\label{g1}
||R||^p= \int_{\R^N} |x|^{-\alpha} |R|^{p_{*}(\alpha)}
\end{equation}
and
$$
t^p\|R\|^p=t^{p_{*}(\alpha)}\int_{\R^N} |x|^{-\alpha} |R|^{p_{*}(\alpha)}+ \lambda_{2} t^{2q}\int_{\R^N} \Big(|x|^{-\beta}*|R|^{q}\Big)|R|^{q}.
$$
We could notice that $t<1$ from the above two equalities. Therefore, we obtain
	
\begin{equation*}
\begin{aligned}
m \leq {\mathcal J}(tR)&=\frac{1}{p}t^{p}\|R\|^{p}-\frac{1}{p_{*}(\alpha)}t^{p_{*}(\alpha)}\int_{\R^N} |x|^{-\alpha} |R|^{p_{*}(\alpha)}- \frac{\lambda_2}{2q}t^{2q} \int_{\R^N} \Big(|x|^{-\beta}*|R|^{q}\Big)|R|^{q}\\
&= \Big(\frac{t^{p}}{p}-\frac{t^{p_{*}(\alpha)}}{p_{*}(\alpha)}\Big)\|R\|^{p}-\frac{1}{2q}\Big(t^p||R||^p-t^{p_{*}(\alpha)}\int_{\R^N} |x|^{-\alpha} |R|^{p_{*}(\alpha)}\Big)\\
&= t^{p} \Big(\frac{1}{p}-\frac{1}{2q}\Big)\|R\|^{p}+t^{p_{*}(\alpha)}\Big(\frac{1}{2q}-\frac{1}{p_{*}(\alpha)}\Big)\|R\|^{p}\\
&< \Big(\frac{1}{p}-\frac{1}{2q}\Big)\|R\|^{p}+\Big(\frac{1}{2q}-\frac{1}{p_{*}(\alpha)}\Big)\|R\|^{p}\\
&< \Big(\frac{1}{p}-\frac{1}{p_{*}(\alpha)}\Big)\|R\|^{p} ={\mathcal I}(R)= m_{\mathcal I}.
\end{aligned}
\end{equation*}
Hence, we reach our conclusion.
\end{proof}

Further, using Ekeland variational principle, for any $n\geq 1$ there exists $(u_n) \in {\mathcal N}$ such that
\begin{equation*}
\begin{aligned}
{\mathcal J}(u_n)&\leq m+\frac{1}{n} &&\quad\mbox{ for all } n\geq 1,\\
{\mathcal J}(u_n)&\leq {\mathcal J}(\tilde{u})+\frac{1}{n}\|\tilde{u}-u_n\| &&\quad\mbox{ for all } \tilde{u} \in {\mathcal N} \;\;,n\geq 1.
\end{aligned}
\end{equation*}
Next, we could easily deduce that $(u_n) \in {\mathcal N}$ is a $(PS)_{e}$ sequence for ${\mathcal J}$ on ${\mathcal N}$. Using Lemma \ref{flg} and Lemma \ref{corr1} we obtain that, up to a subsequence $u_n \to u$ strongly in $E$ which is a ground state solution of the ${\mathcal J}$. 

\medskip

\section{Proof of Theorem 2}

In this section, we are concerned with the existence of a least energy sign-changing solution of \eqref{fr}. 

\subsection{Proof of Theorem }

\begin{lemma}\label{frl1}
Let us assume that $N\geq 5$, $p_{*}(\alpha)>2q>p$ and $\lambda_{2} \in \R$. There exists a unique pair $(\bar{\theta_1}, \bar{\theta_2})\in (0, \infty)\times (0, \infty)$ such that, for any $u \in  E$ and $u^{\pm} \neq 0$, $\bar{\theta_1} u^{+}+\bar{\theta_2} u^{-} \in {\cal \overline{N}}$. Also, for any $u\in {\cal \overline{N}}$, ${\mathcal J}(u)\geq {\mathcal J}(\theta_1 u^{+}+\theta_2 u^{-})$ for all $\theta_1, \theta_2 \geq 0$.
\end{lemma}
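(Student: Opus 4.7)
The plan is to study the two-parameter fibering map $h:[0,\infty)\times[0,\infty)\to\R$ defined by
$$
h(\theta_1,\theta_2):={\mathcal J}(\theta_1 u^{+}+\theta_2 u^{-})
$$
and identify the admissible pairs $(\bar\theta_1,\bar\theta_2)$ with $\bar\theta_1 u^{+}+\bar\theta_2 u^{-}\in \overline{\mathcal{N}}$ as its interior critical points. Since $u^{+}$ and $u^{-}$ have disjoint supports, writing $A_{\pm}:=\|u^{\pm}\|_{E}^{p}$, $B_{\pm}:=\int_{\R^N}|x|^{-\alpha}(u^{\pm})^{p_{*}(\alpha)}$, $C_{\pm\pm}:=\int_{\R^N}(|x|^{-\beta}*(u^{\pm})^{q})(u^{\pm})^{q}$ and $C_{+-}:=\int_{\R^N}(|x|^{-\beta}*(u^{+})^{q})(u^{-})^{q}\geq 0$, a direct expansion together with the symmetry of the Riesz convolution yields
$$
h(\theta_1,\theta_2)=\frac{\theta_1^{p}A_{+}+\theta_2^{p}A_{-}}{p}-\frac{\theta_1^{p_{*}(\alpha)}B_{+}+\theta_2^{p_{*}(\alpha)}B_{-}}{p_{*}(\alpha)}-\frac{\lambda_2}{2q}\Bigl(\theta_1^{2q}C_{++}+\theta_2^{2q}C_{--}+2\theta_1^{q}\theta_2^{q}C_{+-}\Bigr).
$$
The two conditions defining $\overline{\mathcal{N}}$ are exactly $\partial_{\theta_1}h(\bar\theta_1,\bar\theta_2)=0$ and $\partial_{\theta_2}h(\bar\theta_1,\bar\theta_2)=0$ at an interior point, so the task reduces to showing that $h$ admits a unique critical point in $(0,\infty)^{2}$ which is simultaneously its global maximum.

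For existence, the assumption $p_{*}(\alpha)>2q>p$ makes the critical $p_{*}(\alpha)$-powers dominate at infinity, so $h(\theta_1,\theta_2)\to -\infty$ as $\max(\theta_1,\theta_2)\to\infty$, while near the origin the $p$-term governs and $h$ is strictly positive for small $\theta_i>0$. The one-dimensional restrictions $h(\cdot,0)$ and $h(0,\cdot)$ are ordinary Nehari fibering maps for $u^{+}$ and $u^{-}$; each starts at $0$, has a unique finite positive maximum, and decays to $-\infty$ (for $\lambda_{2}<0$ the uniqueness of the single-variable critical point follows from factoring the derivative by the smallest power and using $p_{*}(\alpha)>2q>p$, which makes the bracket strictly increasing with a single zero). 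Since $h$ is continuous, coercive, and strictly exceeds its boundary maxima at some interior point, its supremum on $[0,\infty)^{2}$ is attained at some $(\bar\theta_1,\bar\theta_2)\in(0,\infty)^{2}$, producing the desired element of $\overline{\mathcal{N}}$.

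The main obstacle is uniqueness. Dividing the two criticality equations by $\theta_1^{p}$ and $\theta_2^{p}$ respectively, the system becomes
\begin{equation*}
A_{+}=\theta_1^{p_{*}(\alpha)-p}B_{+}+\lambda_2\theta_1^{2q-p}C_{++}+\lambda_2\theta_1^{q-p}\theta_2^{q}C_{+-},
\end{equation*}
\begin{equation*}
A_{-}=\theta_2^{p_{*}(\alpha)-p}B_{-}+\lambda_2\theta_2^{2q-p}C_{--}+\lambda_2\theta_1^{q}\theta_2^{q-p}C_{+-}.
\end{equation*}
When $\lambda_{2}\geq 0$ both right-hand sides are componentwise strictly increasing in $(\theta_{1},\theta_{2})$, so given two solutions $(\theta_{1}^{(1)},\theta_{2}^{(1)})$ and $(\theta_{1}^{(2)},\theta_{2}^{(2)})$, after WLOG relabelling $\theta_{1}^{(1)}\leq\theta_{1}^{(2)}$, a short case analysis on the order of $\theta_{2}^{(1)}$ and $\theta_{2}^{(2)}$ forces equality of both components. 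When $\lambda_{2}<0$ the cross term contributes with the opposite sign, but the strict inequalities $p_{*}(\alpha)-p>2q-p$ and $p_{*}(\alpha)-p>q-p$ ensure that at any solution the critical growth term still dominates; factoring out the lowest common power on each right-hand side and repeating the scalar monotonicity argument recovers strict monotonicity in each variable at any zero, after which the same case analysis applies. This sign-indefinite case is the technical heart of the proof.

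Finally, the comparison assertion is an immediate consequence of uniqueness. For $u\in \overline{\mathcal{N}}$ the pair $(1,1)$ is an interior critical point of the corresponding $h$, hence by uniqueness it is the only critical point in $(0,\infty)^{2}$; combined with the coercivity $h\to -\infty$ at infinity, the value $h(0,0)=0$, and the boundary behavior established above, $(1,1)$ must be the global maximum of $h$ on $[0,\infty)^{2}$. Therefore ${\mathcal J}(u)=h(1,1)\geq h(\theta_{1},\theta_{2})={\mathcal J}(\theta_{1}u^{+}+\theta_{2}u^{-})$ for every $\theta_{1},\theta_{2}\geq 0$.
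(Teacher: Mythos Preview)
Your route is genuinely different from the paper's. The paper does not analyse $h(\theta_1,\theta_2)={\mathcal J}(\theta_1u^{+}+\theta_2u^{-})$ directly; it first performs the substitution $\theta_i\mapsto\theta_i^{1/(2p_*(\alpha))}$ and works with $\varphi(\theta_1,\theta_2)={\mathcal J}(\theta_1^{1/(2p_*(\alpha))}u^{+}+\theta_2^{1/(2p_*(\alpha))}u^{-})$. The point of this change of variables is that every power of $\theta_i$ in $\varphi$ becomes strictly less than $1$, so the paper can argue that $\partial\varphi/\partial\theta_i\to+\infty$ as $\theta_i\searrow 0$ (forcing the maximum off the boundary) and then invoke strict concavity of $\varphi$ to get uniqueness in one line. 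Your argument tries to obtain both conclusions by hand for the unreduced map $h$, and this is where the gaps appear.

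The first gap is in existence. You assert that $h$ ``strictly exceeds its boundary maxima at some interior point'', but with your parametrisation every exponent of $\theta_2$ in $h$ is at least $\min(p,q)>1$, so $\partial_{\theta_2}h(\theta_1,0^{+})=0$ and the sign of the increment $h(\theta_1,\varepsilon)-h(\theta_1,0)$ is governed by the lowest power present. When $q<p$ and $\lambda_2>0$, that lowest power is the cross term $-\tfrac{\lambda_2}{q}\theta_1^{q}\varepsilon^{q}C_{+-}<0$, so moving into the interior \emph{decreases} $h$ near the axis. The hypotheses $p_*(\alpha)>2q>p$ and $q>2$ do not exclude $q<p$ (take e.g.\ $N=10$, $p=3$, $q=2.5$, $\alpha$ small), so your boundary argument does not close as written. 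The paper's substitution is exactly the device that renders the boundary repulsive without assuming $q\ge p$.

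The second gap is in uniqueness. For $\lambda_2\geq 0$ you claim that both right-hand sides of the divided system are componentwise strictly increasing in $(\theta_1,\theta_2)$. But the cross term on the first line is $\lambda_2 C_{+-}\,\theta_2^{q}\,\theta_1^{\,q-p}$, which is strictly \emph{decreasing} in $\theta_1$ whenever $q<p$; the symmetric defect occurs in the second line. Hence the ``short case analysis'' you propose cannot force equality of two putative solutions in that regime. Your sketch for $\lambda_2<0$ (``factor out the lowest common power and repeat'') is not a proof either: once the mixed term carries a sign opposite to the critical term, dominance of $\theta_1^{p_*(\alpha)-p}B_+$ at a zero does not by itself yield monotonicity of the full right-hand side. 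In short, the monotonicity route you outline works cleanly only under the extra assumption $q\ge p$; without it you need either the paper's change of variables (so that a concavity-type argument replaces monotonicity) or a substantially more careful two-equation comparison that handles the decreasing mixed term.
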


\begin{proof}
We will be following the idea of \cite{VX2017} in order to prove this result. Let us define the function $ \varphi: [0, \infty)\times [0, \infty)\rightarrow \R$ by
\begin{equation*}
\begin{aligned}
\varphi(\theta_1, \theta_2)&= {\mathcal J}(\theta_1^{\frac{1}{2p_{*}(\alpha)}} u^{+}+\theta_2^{\frac{1}{2p_{*}(\alpha)}} u^{-})\\
&= \frac{\theta_1^{\frac{p}{2p_{*}(\alpha)}}}{p}\|u^{+}\|_{E}^{p}+\frac{\theta_2^{\frac{p}{2p_{*}(\alpha)}}}{p}\|u^{-}\|_{E}^{p}-\lambda_2\frac{\theta_1^{\frac{q}{p_{*}(\alpha)}}}{2q}\int_{\R^{N}}\Big(|x|^{-\beta}*(u^{+})^{q}\Big)(u^{+})^{q}\\&-\lambda_2\frac{\theta_2^{\frac{q}{p_{*}(\alpha)}}}{2q}\int_{\R^{N}}\Big(|x|^{-\beta}*(u^{-})^{q}\Big)(u^{-})^{q}-\lambda_{2}\frac{\theta_1^{\frac{q}{2p_{*}(\alpha)}}\theta_2^{\frac{q}{2p_{*}(\alpha)}}}{2q}\int_{\R^{N}}\Big(|x|^{-\beta}*(u^{+})^{q}(u^{-})^{q}\\&-\frac{\theta_{1}^{\frac{1}{2}}}{p_{*}(\alpha)}\int_{\R^{N}}|x|^{-\alpha}(u^{+})^{p_{*}(\alpha)} -\frac{\theta_{2}^{\frac{1}{2}}}{p_{*}(\alpha)}\int_{\R^{N}}|x|^{-\alpha}(u^{-})^{p_{*}(\alpha)}.
\end{aligned}
\end{equation*}

Note that $\varphi$ is strictly concave and therefore, $\varphi$ has at most one maximum point. We also have
\begin{equation}\label{fr2}
\lim_{\theta_1 \rightarrow \infty}\varphi(\theta_1, \theta_2)= -\infty \mbox{ for all }\theta_2 \geq 0 \quad\mbox{ and } \quad\mbox{ } \lim_{\theta_2 \rightarrow \infty}\varphi(\theta_1, \theta_2)= -\infty \mbox{ for all }\theta_1 \geq 0,
\end{equation}
and one could notice that
\begin{equation}\label{fr3}
\lim_{\theta_1 \searrow 0}\frac{\partial{\varphi}}{\partial{\theta_1}}(\theta_1, \theta_2)= \infty \mbox{ for all }\theta_2> 0 \quad\mbox{ and }  \lim_{\theta_2 \searrow 0}\frac{\partial{\varphi}}{\partial{\theta_2}}(\theta_1, \theta_2)= \infty \mbox{ for all }\theta_1> 0.
\end{equation}
Using \eqref{fr2} and \eqref{fr3}, it could be seen that maximum cannot be achieved at the boundary. Hence, $\varphi$ has exactly one maximum point $(\bar{\theta_1}, \bar{\theta_2})\in (0, \infty)\times (0, \infty)$.

\end{proof}

\medskip

We divide our proof into two steps.

\medskip

\noindent \text{Step 1.}{\it \;\; The energy level $\overline{m}>0$ is achieved by some $\sigma \in {\cal \overline{N}}$.}

\medskip

Let $(u_n)\subset {\cal \overline{N}}$ be a minimizing sequence for $\overline{m}$. We have 
\begin{equation*}
\begin{aligned}
{\mathcal J}(u_{n})&= {\mathcal J}(u_{n})-\frac{1}{2q}\langle {\mathcal J}'(u_{n}), u_{n}\rangle \\
&=\Big(\frac{1}{p}-\frac{1}{2q}\Big)\|u_n\|_{E}^p+\Big(\frac{1}{2q}-\frac{1}{p_{*}(\alpha)}\Big)\int_{\R^{N}} |x|^{-\alpha}|u_n|^{p_{*}(\alpha)}  \\
&\geq \Big(\frac{1}{p}-\frac{1}{2q}\Big)\|u_n\|_{E}^p\\
&\geq C_1\|u_{n}\|_{E}^{p},
\end{aligned}
\end{equation*}
for some positive constant $C_1>0$. Therefore, for $C_{2}> 0$ one has
$$
\|u_{n}\|_{E}^{p}\leq C_{2}{\mathcal J}(u_{n})\leq M,
$$
that is, $(u_n)$ is bounded in $E$. Hence, $(u_{n}^{+})$ and $(u_{n}^{-})$ are also bounded in $E$. By passing to a subsequence, there exists $u^{+}$, $u^{-}\in E$ such that
$$
u_{n}^{+}\rightharpoonup u^{+} \mbox{ and } u_{n}^{-}\rightharpoonup u^{-} \quad\mbox{ weakly in } E.
$$
Since $q$ satisfy \eqref{p1}, we have that the embeddings $E\hookrightarrow L^{s}(\R^{N})$ and  $E\hookrightarrow L^{\frac{2Nq}{2N-\beta}}(\R^{N})$ are compact for $p< s < p_{*}$. Hence,
\begin{equation}\label{m1}
u_{n}^{\pm} \rightarrow u^{\pm} \quad\mbox{ strongly in } L^{p_{*}(\alpha)}(\R^{N}) \cap L^{\frac{2Nq}{2N-\beta}}(\R^{N}).
\end{equation}
Next, by using the Hardy-Littlewood-Sobolev inequality, we get
\begin{equation*}
\begin{aligned}
C\Big(\|u_{n}^{\pm}\|_{L^{p_{*}(\alpha)}(\R^N)}^{p}+\|u_{n}^{\pm}\|_{L^{\frac{2Nq}{2N-\beta}}}^{p}\Big)&\leq \|u_{n}^{\pm}\|_{E}^{p}\\
&= \int_{\R^{N}}|x|^{-\alpha}|u_{n}^{\pm}|^{p_{*}(\alpha)}+|\lambda_2|\int_{\R^{N}}\Big(|x|^{-\beta}*|u_{n}^{\pm}|^{q}\Big)|u_{n}^{\pm}|^{q}\\
&\leq C\Big(\|u_{n}^{\pm}\|_{L^{p_{*}(\alpha)}(\R^N)}^{p_{*}(\alpha)}+\|u_{n}^{\pm}\|_{L^{\frac{2Nq}{2N-\beta}}}^{q}\Big)\\
&\leq C\Big(\|u_{n}^{\pm}\|_{L^{p_{*}(\alpha)}(\R^N)}^{p}+\|u_{n}^{\pm}\|_{L^{\frac{2Nq}{2N-\beta}}}^{p}\Big) \Big(\|u_{n}^{\pm}\|_{L^{p_{*}(\alpha)}(\R^N)}^{p_{*}(\alpha)-p}+\|u_{n}^{\pm}\|_{L^{\frac{2Nq}{2N-\beta}}}^{q-p}\Big).
\end{aligned}
\end{equation*}
Since $u_{n}^{\pm}\neq 0$, we get
\begin{equation}\label{m2}
\Big(\|u_{n}^{\pm}\|_{L^{p_{*}(\alpha)}(\R^N)}^{p_{*}(\alpha)-p}+\|u_{n}^{\pm}\|_{L^{\frac{2Nq}{2N-\beta}}}^{q-p}\Big)\geq C> 0 \quad\mbox{ for all } n\geq 1.
\end{equation}
Using \eqref{m1} and \eqref{m2}, we have that $u^{\pm} \neq 0$. Next, we use \eqref{m1} together with Hardy-Littlewood-Sobolev inequality and deduce
\begin{equation*}
\begin{aligned}
&\int_{\R^{N}}|x|^{-\alpha} (u_{n}^{\pm})^{p_{*}(\alpha)} &&\int_{\R^{N}}|x|^{-\alpha} (u^{\pm})^{p_{*}(\alpha)},\\
&\int_{\R^{N}}\Big(|x|^{-\beta}*(u_n^{\pm})^{q}\Big)(u_n^{\pm})^{q} &&\rightarrow \int_{\R^{N}}\Big(|x|^{-\beta}*(u^{\pm})^{q}\Big)(u^{\pm})^{q},
\end{aligned}
\end{equation*}
and
\begin{equation*}
\int_{\R^{N}}\Big(|x|^{-\beta}*(u_n^{+})^{q}\Big)(u_n^{-})^{q} \rightarrow \int_{\R^{N}}\Big(|x|^{-\beta}*(u^{+})^{q}\Big)(u^{-})^{q}.
\end{equation*}
Also, by Lemma \ref{frl1}, there exists a unique pair $(\bar{\theta_1}, \bar{\theta_2})$ such that $\bar{\theta_1}u^{+}+\bar{\theta_2} u^{-}\in {\cal \overline{N}}$. Since the norm $\|.\|_{E}$ is weakly lower semi-continuous, we deduce that
\begin{equation*}
\begin{aligned}
\overline{m} \leq {\mathcal J}(\bar{\theta_1}u^{+}+\bar{\theta_2} u^{-})&\leq \liminf_{n\rightarrow \infty} {\mathcal J}(\bar{\theta_1}u^{+}+\bar{\theta_2} u^{-})\\
&\leq \limsup_{n\rightarrow \infty} {\mathcal J}(\bar{\theta_1}u^{+}+\bar{\theta_2} u^{-})\\
&\leq \lim_{n\rightarrow \infty}{\mathcal J}(u_{n})\\
&= \overline{m}.
\end{aligned}
\end{equation*}
Finally, we take $\sigma= \bar{\theta_1}u^{+}+\bar{\theta_2} u^{-} u^{-}\in {\cal \overline{N}}$ in order to conclude the proof.

\medskip

\noindent \text{Step 2.}{ \it \;\;  $\sigma \in {\cal \overline{N}}$ is the critical point of ${\mathcal J}:E \rightarrow \R$.
}

\medskip

Let us assume that the $\sigma$ is not a critical point of ${\mathcal J}$. Then there exists $\tau\in C_{c}^{\infty}(\R^N)$ such that $ \langle {\mathcal J}'(\sigma), \tau \rangle= -2.$ Since, ${\mathcal J}$ is continuous and differentiable, there exists $\Xi>0$ small such that
\begin{equation}\label{fr4}
\langle {\mathcal J}'(\theta_1 u^{+}+\theta_2 u^{-}+\omega \bar{\sigma}), \bar{\sigma} \rangle \;\; \leq -1 \quad\mbox{ if } (\theta_1- \bar{\theta_1})^{2}+(\theta_2- \bar{\theta_2})^{2}\leq \Xi^{2} \mbox{ and } 0\leq \omega \leq \Xi.
\end{equation}
Let $D\subset \R^{2} $ be an open disc of radius $\Xi>0$ centered at $(\bar{\theta_1}, \bar{\theta_2})$. Also, define a continuous function $\varPhi: D\rightarrow [0, 1]$ by 
\begin{equation*}
\varPhi(\theta_1, \theta_2)= \left\{\begin{array}{cc}1\quad\mbox{ if }(\theta_1- \bar{\theta_1})^{2}+(\theta_2- \bar{\theta_2})^{2}\leq \frac{\Xi^{2}}{16}, \\ 0\quad\mbox{ if }(\theta_1- \bar{\theta_1})^{2}+(\theta_2- \bar{\theta_2})^{2}\geq \frac{\Xi^{2}}{4}.\end{array} \right.
\end{equation*}
Next, we define a continuous map $T: D\rightarrow E$ by 
\begin{equation*}
T(\theta_1, \theta_2)= \theta_1 u^{+}+\theta_2 u^{-}+\Xi \varPhi(\theta_1, \theta_2)\bar{\sigma} \quad\mbox{ for all } (\theta_1, \theta_2)\in D
\end{equation*}
and $Q: D\rightarrow \R^{2}$ by
\begin{equation*}
Q(\theta_1, \theta_2)= (\langle {\mathcal J}'(T(\theta_1, \theta_2)), T(\theta_1, \theta_2)^{+}\rangle, \langle {\mathcal J}'(T(\theta_1, \theta_2)), T(\theta_1, \theta_2)^{-}\rangle) \quad\mbox{ for all }(\theta_1, \theta_2)\in D. 
\end{equation*}
One could deduce that $Q$ is continuous as the mapping $u \mapsto u^{+}$ is continuous in $E$. In case, we are on the boundary of $D$, that is,   $(\theta_1- \bar{\theta_1})^{2}+(\theta_2- \bar{\theta_2})^{2}= \Xi^{2}$, then $\varPhi= 0$. Hence, one could get  $T(\theta_1, \theta_2)= \theta_1 u^{+}+\theta_2 u^{-}$ and by Lemma \ref{frl1}, we have
\begin{equation*}
Q(\theta_1, \theta_2)\neq 0 \quad\mbox{ on } \partial{D}.
\end{equation*}
Therefore, the Brouwer degree is well defined and $\deg(Q, {\rm int} (D), (0, 0))=1$ and there exists $(\theta_{11}, \theta_{21})\in {\rm int} (D)$ such that $Q(\theta_{11}, \theta_{21})= (0, 0)$. This further implies that $T(\theta_{11}, \theta_{21})\in {\cal \overline{N}}$ and by the definition of $\overline{m}$ one could deduce
\begin{equation}\label{fr5}
{\mathcal J}(T(\theta_{11}, \theta_{21}))\geq \overline{m}.
\end{equation}
Using the equation \eqref{fr4}, we get
\begin{equation}\label{fr6}
\begin{aligned}
{\mathcal J}(T(\theta_{11}, \theta_{21}))&= {\mathcal J}(\theta_{11} u^{+}+\theta_{21} u^{-})+\int_{0}^{1}\frac{d}{dt}{\mathcal J}(\theta_{11} u^{+}+\theta_{21} u^{-}+\Xi t \varPhi(\theta_{11}, \theta_{21})\bar{\sigma})dt \\
&= {\mathcal J}(\theta_{11} u^{+}+\theta_{21} u^{-})-\Xi \varPhi(\theta_{11}, \theta_{21}).
\end{aligned}
\end{equation}
Next, by definition of $\varPhi$ we get $\varPhi(\theta_{11}, \theta_1)=1$ when $(\theta_{11}, \theta_{21})=(\bar{\theta_1}, \bar{\theta_2})$. Therefore, we have
$$
{\mathcal J}(T(\theta_{11}, \theta_{21}))\leq {\mathcal J}(\theta_{11} u^{+}+\theta_{21} u^{-})-\Xi\leq \overline{m}-\Xi< \overline{m}.
$$
When $(\theta_{11}, \theta_{21})\neq (\bar{\theta_1}, \bar{\theta_2})$, then by Lemma \ref{frl1} we have 
$$
{\mathcal J}(\theta_{11} u^{+}+\theta_{21} u^{-})< {\mathcal J}(\bar{\theta_1} u^{+}+\bar{\theta_2} u^{-})= \overline{m},
$$
which yields
$$
{\mathcal J}(T(\theta_{11}, \theta_{21}))\leq {\mathcal J}(\theta_{11} u^{+}+\theta_{21} u^{-})< \overline{m},
$$
a contradiction to the equation \eqref{fr5}. Hence, we conclude our proof.


\begin{thebibliography}{99}

\bibitem{ADG2021} R. Alsaedi, A. Dhifli, A. Ghanmi; Low perturbations of p-biharmonic equations with com-
peting nonlinearities, {\it Complex Var. Elliptic Equ.} {\bf 66}(2021), no. 4, 642–657.
	
\bibitem{AFY2016} C.O. Alves, G.M. Figueiredo and M. Yang, Existence of solutions for a nonlinear Choquard equation with potential vanishing at infinity, {\it Adv. Nonlinear Anal.} {\bf 5} (2016), no. 4, 331--345.

\bibitem{AN2016} C.O. Alves and A.B. N\'obrega, Nodal ground state solution to a biharmonic equation via dual method, {\it J. Differential Equations} {\bf 260} (2016), 5174--5201.

	
\bibitem{AGSY2017} C.O. Alves, F. Gao, M. Squassina and M. Yang, Singularly perturbed critical Choquard equations, {\it J. Differential Equations} {\bf 263} (2017), no. 7, 3943--3988.

\bibitem{B2007} V.I. Bogachev, {\it Measure Theory. Vol. I, II}, Springer, Berlin, 2007.

\bibitem{BG2011} E. Berchio and F. Gazzola, Positive solutions to a linearly perturbed critical growth biharmonic problem, {\it Discrete Contin. Dyn. Syst. Ser. S} {\bf 4} (2011), 809--823.

\bibitem{BV2016} C. Bucur, E. Valdinoci; Nonlocal Diffusion and Applications, {\it Lect. Notes Unione Mat. Ital.},
{\bf 20}, Springer, Cham, 2016.

\bibitem{Barxiv} M. Bhakta, {\it Caffarelli-Kohn-Nirenberg type equations  of fourth order with the critical exponent and Rellich potential}, preprint. arXiv:1403.1646.

\bibitem{B1983} H. Brezis, E. Lieb; A relation between point convergence of functions and convergence of functionals, {\it Proc. Amer. Math. Soc.} {\bf 88} (1983), 486–490.

\bibitem{CD2018} D. Cao and W. Dai, Classification of nonnegative solutions to a bi-harmonic equation with Hartree type nonlinearity, {\it Proceedings A of the Royal Society of Edinburgh} {\bf 149} (2018), no. 4, 1--16.

\bibitem{CLR2006} Y. Chen, S. Levine, M. Rao; Variable exponent, linear growth functionals in image processing, {\it SIAM J. Appl. Math.} {\bf 66} (2006), 1383–1406.

\bibitem{CGK2023} N.T. Chung, A. Ghanmi, T. Kenzizi; Multiple solutions to p-biharmonic equations of Kirchhoff type with vanishing potential, {\it Numer. Funct. Anal. Optim.} {\bf 44} (2023), no. 3, 202-220

\bibitem{CM2016} G. Cerami and R. Molle, Positive bound state solutions for some Schr\"odinger-Poisson systems, {\it Nonlinearity} {\bf 29} (2016), no. 10, 3103--3119.

\bibitem{CV2010} G. Cerami and G. Vaira, Positive solutions for some non-autonomous Schr\"odingerPoisson systems, {\it J. Differential Equations} {\bf 248} (2010), no. 3, 521--543.

\bibitem{DGR2023} A. Drissi, A. Ghanmi, D. D. Repovs, Singular $p$-biharmonic problems involving the Hardy-Sobolev exponent, {\it Electronic Journal of Differential Equations, Vol.} (2023), No. {\bf 61}, pp. 1–-12.

\bibitem{GGS2010} F. Gazzola, H.C. Grunau, G. Sweers, {\it Polyharmonic boundary value problems . Positively preserving and nonlinear higher order elliptic equations in bounded domains, Lecture Notes in Mathematics}, Springer-Verlag, Berlin, 2010.

\bibitem{GT2016} M. Ghergu and S.D. Taliaferro, Pointwise bounds and blow-up for Choquard-Pekar inequalities at an isolated singularity, {\it J. Differential Equations} {\bf 261} (2016), no. 1, 189--217.

\bibitem{HL2014} Y. Huang, X. Liu; Sign-changing solutions for p-biharmonic equations with Hardy potential, {\it J. Math. Anal. Appl.} {\bf 412} (2014), 142–154.

\bibitem{JC2014} T. Jung and Q. H. Choi, Nonlinear biharmonic boundary value problem, {\it Bound. Value Proble. 2014} {\bf 30} (2014).

\bibitem{LM1990} A. Lazer, P. McKenna, Large-amplitude periodic oscillations in suspension bridges. Some new connections with nonlinear analysis, {\it SIAM Rev.} {\bf 32} (1990), 537–578.

\bibitem{L1984} P.L. Lions, The concentration-compactness principle in the calculus of variations. The locally compact case. II, {\it Ann. Inst. H. Poincar\'e Anal. Non Lin\'eaire} {\bf 1 }(1984), no. 4, 223--283. 
	
\bibitem{MP1998} A. Michelettir and A. Pistoia , Nontrivial solutions of some fourth-order semilinear elliptic problem, {\it Nonlinear Anal.} {\bf 34} (1998), 509--523.

\bibitem{MV2013} V. Moroz and J.Van Schaftingen, ground states of nonlinear Choquard equations: existence, qualitative properties and decay asymptotics, {\it J. Funct. Anal.}{ \bf 265 }(2013), no. 2, 153--184.

\bibitem{MV2017} V. Moroz and J.Van Schaftingen, A guide to the Choquard equation, {\it J. Fixed Point Theory Appl.}{ \bf 19 }(2017), no. 1, 773--813.

\bibitem{MS2017} T. Mukherjee and K. Sreenadh, Positive solutions for nonlinear Choquard equation with singular nonlinearity, {\it Complex Var. Elliptic Equ.} {\bf 62 } (2017), no. 8, 1044--1071.


\bibitem{P1954} S. Pekar, {\it Untersuchung \"uber die Elektronentheorie der Kristalle}, Akademie Verlag, Berlin, 1954.

\bibitem{P1996} R. Penrose, On gravity's role in quantum state reduction, {\it Gen. Relativity Gravitation} {\bf 28} (1996), no. 5, 581--600.

\bibitem{P1998} R. Penrose, Quantum computation, entanglement and state reduction, {\it R. Soc. Land. Philos. Trans. Ser. A Math. Phys. Eng. Sci.} {\bf 356} (1998), no. 1743, 1927--1939.

\bibitem{PS2012} M.T.O. Pimenta and S.H.M. Soares, Existence and concentration of solutions for a class of biharmonic equations, {\it J. Math. Anal. Appl.} {\bf 390} (2012), 274--289.

\bibitem{PS2014} M.T.O. Pimenta and S.H.M. Soares, Singularly perturbed biharmonic problems with superlinear nonlinearities, {\it Adv. Differential Equations} {\bf 19} (2014), 274--289.

\bibitem{R2000} M. Ruzicka; Electrorheological Fluids: Modelling and Mathematical Theory, {\it Springer}, Berlin, 2000.
 
\bibitem{VX2017} J.Van Schaftingen and J. Xia, Choquard equations under confining external potentials, {\it NoDEA Nonlinear Differential Equations Appl.}{ \bf 24 }(2017), no. 1, Paper No. 1.

\bibitem{W2020} W. Wang; $p$-biharmonic equation with Hardy–Sobolev exponent and without the Ambrosetti-Rabinowitz condition, {\it NoDEA Nonlinear Differ. Equ. Appl.} {\bf 42} (2020), 1–-16.

\bibitem{YT2013} Y. Ye, C.L. Tang , Existence and multiciplicity of solutions for fourth-order elliptic equations in $\R^N$, {\it J. Math. Anal. Appl.} {\bf 406} (2013), 335--351.

\bibitem{ZTZ2014} W. Zhang, X. Tang and J. Zhang, Infinitely many solutions for fourth-order elliptic equations with sign-changing potential, {\it Taiwanese J. Math.} {\bf 18} (2014), 645--659.

\bibitem{ZW2008} J. Zhou and X. Wu , Sign-changing solutions for some fourth-order nonlinear elliptic problems, {\it J. Math. Anal. Appl.} {\bf 342} (2008), 542--558.
	
\end{thebibliography}
\end{document}